\newtheorem{theorem}{Theorem}[section]
\newtheorem{lemma}[theorem]{Lemma}
\newtheorem{corollary}[theorem]{Corollary}
\theoremstyle{definition}
\newtheorem{definition}[theorem]{Definition}
\newtheorem{proposition}[theorem]{Proposition}
\theoremstyle{remark}
\newtheorem{remark}[theorem]{Remark}
\numberwithin{equation}{section}
\def\M{\mathcal M}
\def\N{\mathcal N}
\keywords{$C$-numerical radius, von Neumann algebras, factors, finite factors, unitary conjugates, weakly unitarily invariant norm, Aluthge transform, the $\lambda$-Aluthge transform}
\subjclass[2000]{Primary 47A12, Secondary 46L10}
\begin{document}
\title[]{A note on the $C$-numerical radius and the $\lambda$-Aluthge transform in finite factors}
\author{Xiaoyan Zhou}
\author{Junsheng Fang}
\author{Shilin Wen}
\address{Xiaoyan Zhou}\address{School of Mathematical Sciences, Dalian University of Technology. Dalian~{116024}. China}
\email{doctoryan@mail.dlut.edu.cn}

\address{Junsheng Fang}\address{School of Mathematical Sciences, Dalian University of Technology. Dalian~{116024}. China}
\email{junshengfang@hotmail.com}

\address{Shilin Wen}\address{School of Mathematical Sciences, Dalian University of Technology. Dalian~{116024}. China}
\email{shilinwen127@gmail.com}

\begin{abstract}
We prove that for any two elements $A$, $B$ in a factor $\M$, if $B$ commutes with all the unitary conjugates of $A$, then either $A$ or $B$ is in $\mathbb{C}I$. Then we obtain an equivalent condition for the situation that the $C$-numerical radius $\omega_{C}(\cdot)$ is a weakly unitarily invariant norm on finite factors and we also prove some inequalities on the $C$-numerical radius on finite factors. As an application, we show that for an invertible operator $T$ in a finite factor $\M$,  $f(\bigtriangleup_{\lambda}(T))$ is in the weak operator closure of the set $\{\sum_{i=1}^{n}z_{i}U_{i}f(T)U_{i}^{*}|~n\in \mathbb{N},(U_{i})_{1\leq i\leq n}\in \mathscr{U}(\M),\sum_{i=1}^{n}|z_{i}|\leq 1\}$, where $f$ is a polynomial, $\bigtriangleup_{\lambda}(T)$ is the $\lambda$-Aluthge transform of $T$ and $0\leq\lambda \leq 1$.
\end{abstract}
\maketitle
\section{notation and introduction}
Denote by $B(\mathscr{H})$ the set of bounded linear operators on a Hilbert space $\mathscr{H}$ and $M_{n}(\mathbb{C})$ the self-adjoint algebra of the $n\times n$ matrices. A von Neumann algebra $\M$ on $\mathscr{H}$ is a unital weak operator closed $*$-algebra. A von Neumann algebra $\M$ is said to be a factor if $\M\cap \M'=\mathbb{C}I$, where $I$ is the identity of $\M$.  A von Neumann algebra $\M$ is finite if it has a faithful normal tracial state. If $\M$ is a finite factor with a faithful normal trace $\tau$, denote by $\|\cdot\|_{1}$ the norm on $\M$ to be $\tau(|\cdot|)$. Then denote by $L^{1}(\M,\tau)$ the completion of $\M$ with respect to $\|\cdot\|_{1}$ norm. Also to each normal linear functional $f$ on $\M$ corresponds a unique element $X\in L^{1}(\M,\tau)$ such that $f(\cdot)=\tau(X\cdot)$. Denote by $\mathscr{U}(\M)$ the set of all the unitary operators in a von Neumann algebra $\M$.

Let $tr$ be the normalized trace of $M_{n}(\mathbb{C})$. Given a matrix $C\in M_{n}(\mathbb{C})$ and set $$\omega_{C}(A)=\max\limits_{U\in \mathscr{U}( M_{n}(\mathbb{C}))}|tr (CUAU^{*})|.$$ Then $\omega_{C}(A)$ is called the \emph{$C$-numerical radius} of $A$. We say a norm $|||\cdot|||$ on $M_{n}(\mathbb{C})$ \emph{weakly unitarily invariant} if $|||A|||=|||UAU^{*}|||$ for all $A\in M_{n}(\mathbb{C}), U\in\mathscr{U}( M_{n}(\mathbb{C}))$. Note that for every $C\in M_{n}(\mathbb{C})$, the $C$-numerical radius $\omega_{C}$ is a weakly unitarily invariant seminorm on $M_{n}(\mathbb{C})$. It is a norm on
$M_{n}(\mathbb{C})$ if and only if $C$ is not a scalar and has nonzero trace. The family $\omega_{C}$ of $C$-numerical radius, where $C$ is not a scalar and has nonzero trace, plays a role analogous to that of Ky Fan norms in the family of unitarily invariant norm \cite[Theorem IV.4.7]{book}. A norm $|||\cdot|||$ on $M_{n}(\mathbb{C})$ is called \emph{a unitarily invariant norm} if $|||A|||=|||UAV^{*}|||$ for all $A\in M_{n}(\mathbb{C}), U,V\in\mathscr{U}( M_{n}(\mathbb{C}))$. The concept of unitarily invariant norms was introduced by von Neumann \cite{VON} for the purpose of metrizing matrix spaces. Von Neumann and his associates established that the class of unitarily invariant norms of $n\times n$ complex matrices coincides with the class of symmetric gauge function of their $s$-numbers. These norms have now been variously generalized and utilized in many contexts. For historical perspectives and surveys, we refer the reader to (\cite{book},\cite{K},\cite{IC},\cite{R},\cite{RS},\cite{B} and etc).

Let $T\in B(\mathscr{H})$ and let $T=U|T|$ be its polar decomposition. The Aluthge transform of $T$ is the operator $\bigtriangleup(T)=|T|^{\frac{1}{2}}U|T|^{\frac{1}{2}}$. This was first studied in \cite{AA} and has received much attention in recent years. One reason the Aluthge transform is interesting is in relation to the invariant subspace problem. Jung, Ko and Pearcy prove in \cite{JKP} that $T$ has a nontrivial invariant subspace if and only if $\bigtriangleup(T)$ does. They also note that when $T$ is quasiaffinity, then $T$ has a nontrivial hyperinvariant subspace if and only if $\bigtriangleup(T)$ does. A quasiaffinity is an operator with zero kernel and dense range. The invariant and hyperinvariant subspace problems are interesting only for quasiaffinities. Clearly, the spectrum of
$\bigtriangleup(T)$ equals that of $T$. Jung, Ko and Percy in \cite{JKP} proved that other spectral data are also preserved by the Aluthge transform. Dykema and Schultz in \cite{DS} proved the Brown measures are unchanged by the Aluthge transform.

Another reason is related with iterated Aluthge transform. Let
$\bigtriangleup^{0}(T)=T$ and $\bigtriangleup^{n}(T)=\bigtriangleup(\bigtriangleup^{n-1}(T))$
for every $n\in \mathbb{N}$. It was conjectured in \cite{JKP} that the sequence $\{\bigtriangleup^{n}(T)\}_{n\in\mathbb{N}}$ converges in the norm topology. For more surveys, we refer the reader to (\cite{AA},\cite{APS},\cite{DS},\cite{JKP},\cite{Okubo2},\cite{Okubo} and etc).

The $\lambda$-Aluthge transform of $T$ is defined in \cite{Okubo2} by $\bigtriangleup_{\lambda}(T)=|T|^{\lambda}U|T|^{1-\lambda} $, $0\leq\lambda \leq1$. In particular, for $\lambda=\frac{1}{2}$, $\bigtriangleup_{\frac{1}{2}}(T)$ is just the Aluthge transform $\bigtriangleup(T)$. Okubo in \cite{Okubo2} proved that for an invertible operator $T\in B(\mathscr{H})$, $\|f(\bigtriangleup_{\lambda}(T))\|\leq\|f(T)\|$ for any polynomial $f$ and $\|\cdot\|$ a weakly unitarily invariant norm. Fore more results on the $\lambda$-Aluthge transform, we refer the reader to (\cite{CM},\cite{Okubo2},\cite{Okubo} and etc)

This paper is organized as follows.

The key motivation for studying the $C$-numerical radius $\omega_{C}$ on finite factors stems from the fact that for the finite dimensional case, i.e., $M_{n}(\mathbb{C})$, it has a relation with weakly unitarily invariant norms on $M_{n}(\mathbb{C})$. So in section 2, we use some knowledge on dual norms to show that relation.

In section 3, We first prove that if $\M$ is a factor, then for any non-trivial projection $P$ in $\M$, all the unitary conjugates of $P$ generate the whole von Neumann algebra $\M$ (see Lemma \ref{5}).
Then using this lemma we prove a technical result in this paper.

\begin{theorem}[see Theorem \ref{1}]
Let $\M$ be a factor and $A,B\in \M$. If $UAU^{*}B=BUAU^{*}$ holds for every $U\in \mathscr{U}(\M)$, then either $A$ or $B$ is in $\mathbb{C}I$.
\end{theorem}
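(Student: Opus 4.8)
The plan is to argue by contraposition on $A$: assuming $A \notin \mathbb{C}I$, I will show that the hypothesis forces $B \in \mathbb{C}I$. The central observation is that the commutation relation $UAU^{*}B = BUAU^{*}$ for all $U$ says precisely that $B$ lies in the commutant of the set $S = \{UAU^{*} : U \in \mathscr{U}(\M)\}$, and hence in the commutant of the von Neumann algebra $\N := S''$ generated by $S$. If I can show that $\N = \M$, then $B \in \M \cap \M' = \mathbb{C}I$ by the factor property, and the proof is complete.

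First I would record that $\N$ is invariant under every inner automorphism of $\M$: for any $V \in \mathscr{U}(\M)$, conjugation by $V$ carries $S$ onto itself, since $V(UAU^{*})V^{*} = (VU)A(VU)^{*}$ runs over $S$ as $U$ does, and therefore $V\N V^{*} = \N$. Next, because $A \in \N$ (take $U=I$) and $A \notin \mathbb{C}I$, the algebra $\N$ is strictly larger than $\mathbb{C}I$. A von Neumann algebra whose only projections are $0$ and $I$ must equal $\mathbb{C}I$: each self-adjoint element has all of its spectral projections in the algebra, so the absence of non-trivial projections forces every self-adjoint element, and hence every element, to be a scalar. Consequently $\N$ contains a non-trivial projection $P$.

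The final step uses Lemma \ref{5}. By the conjugation-invariance of $\N$, every unitary conjugate $UPU^{*}$ of $P$ again lies in $\N$, so the von Neumann algebra generated by all unitary conjugates of $P$ is contained in $\N$. By Lemma \ref{5}, that generated algebra is all of $\M$, whence $\N = \M$. Combining this with the first paragraph, $B$ commutes with every element of $\M$, so $B \in \M \cap \M' = \mathbb{C}I$, as desired.

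I expect the only genuinely substantial input to be Lemma \ref{5}, which does the heavy lifting of turning a single non-trivial projection into a family generating $\M$; once that is in hand, the remaining steps — passing from the commutation hypothesis to the generated algebra $\N$, checking conjugation-invariance, and extracting a non-trivial projection — are essentially routine. The one point to handle with a little care is the extraction of a non-trivial projection from $\N \neq \mathbb{C}I$, that is, the justification (via spectral theory) that a von Neumann algebra with only trivial projections must be scalar.
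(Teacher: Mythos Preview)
Your argument is correct, and it is noticeably cleaner than the paper's proof. Both proofs rest on Lemma \ref{5}, but you apply it once and abstractly: you observe that the von Neumann algebra $\N$ generated by the unitary orbit of $A$ is globally invariant under inner automorphisms of $\M$, extract a non-trivial projection $P\in\N$ from $A\notin\mathbb{C}I$ by spectral theory, and then Lemma \ref{5} immediately gives $\N=\M$ and hence $B\in\M'\cap\M=\mathbb{C}I$. The paper instead fixes a projection $P$ in $\M$, writes $A$ and $B$ as $2\times 2$ operator matrices, plugs in particular one-parameter families of unitaries to derive off-diagonal vanishing relations between the entries of $A$ and $B$, and then carries out a case analysis according to the type of $\M$ (type $\mathrm{I}_\infty$, $\mathrm{II}_1$, $\mathrm{II}_\infty$, $\mathrm{III}$), invoking Lemma \ref{5} only inside the $\mathrm{II}_1$/$\mathrm{III}$ case and reducing the other infinite cases to it. Your approach avoids all of the matrix computations and the type-by-type analysis; the paper's approach, on the other hand, is more explicit about the interaction between the matrix entries of $A$ and $B$, which could be of independent interest but is not needed for the bare statement.
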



We define the $C$-numerical radius on finite factors.
\begin{definition}Let $\M$ be a finite factor with a faithful normal trace $\tau$ and for $A,C\in\M$, the $C$-numerical radius of $A$ is defined as $$\omega_{C}(A)=\sup\limits_{U\in \mathscr{U}(\M)}|\tau (CUAU^{*})|.$$\end{definition}

Observe that the $C$-numerical radius of $A$ is a weakly unitarily invariant seminorm on $\M$.

In section 4, as one application of Theorem 1.1, we prove the following corollary.
\begin{corollary}[see Corollary \ref{2}]
Let $\M$ be a finite factor with a faithful normal trace $\tau$. The $C$-numerical radius $\omega_{C}$ is a norm on $\M$ if and only if
\begin{enumerate}
\item $C$ is not a scalar multiple of $I$ and;
\item $\tau(C)\neq0$.
\end{enumerate}
\end{corollary}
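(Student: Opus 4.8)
The plan is to reduce everything to the definiteness of $\omega_{C}$: since $\omega_{C}$ is already known to be a weakly unitarily invariant seminorm, it is a norm exactly when its kernel $\{A\in\M:\omega_{C}(A)=0\}$ is trivial. The one observation that drives the whole argument is that, because $\omega_{C}(A)$ is a supremum of absolute values, $\omega_{C}(A)=0$ is equivalent to $\tau(CUAU^{*})=0$ for every $U\in\mathscr{U}(\M)$.

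For the \emph{necessity} of the two conditions I would argue contrapositively. If $C=cI$ is a scalar, then $\tau(CUAU^{*})=c\,\tau(A)$ for all $U$, so $\omega_{C}(A)=|c|\,|\tau(A)|$ vanishes on any nonzero trace-zero element (such an element exists whenever $\M\neq\mathbb{C}I$); hence $\omega_{C}$ is not a norm. If instead $\tau(C)=0$, then $\omega_{C}(I)=|\tau(C)|=0$ while $I\neq0$, so again $\omega_{C}$ is not a norm. Therefore being a norm forces both (1) and (2).

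For the \emph{sufficiency}, assume (1) and (2) and suppose $A\in\M$ satisfies $\omega_{C}(A)=0$, i.e.\ $\tau(CUAU^{*})=0$ for all $U$; I must show $A=0$. The key step, and the main obstacle, is to upgrade this scalar orthogonality into the commutation relation needed to invoke Theorem \ref{1}, namely that $C$ commutes with every unitary conjugate of $A$. To this end, fix $W\in\mathscr{U}(\M)$ and note that $WAW^{*}$ again lies in the kernel, since $\tau\bigl(CU(WAW^{*})U^{*}\bigr)=\tau\bigl(C(UW)A(UW)^{*}\bigr)=0$ as $UW$ ranges over $\mathscr{U}(\M)$. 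For any self-adjoint $H\in\M$ the function $t\mapsto\tau\bigl(Ce^{itH}(WAW^{*})e^{-itH}\bigr)$ is real-analytic and identically $0$; differentiating at $t=0$ gives $\tau\bigl([C,H]\,WAW^{*}\bigr)=0$, which after using the trace property rearranges to $\tau\bigl(H\,[WAW^{*},C]\bigr)=0$. Since every element of $\M$ is a linear combination of self-adjoint elements, this holds for all $H\in\M$; taking $H=[WAW^{*},C]^{*}$ and using faithfulness of $\tau$ yields $[WAW^{*},C]=0$. Thus $WAW^{*}C=CWAW^{*}$ for every $W$.

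With the commutation relation in hand, I apply Theorem \ref{1} to the pair $(A,C)$: the hypothesis $UAU^{*}C=CUAU^{*}$ for all $U\in\mathscr{U}(\M)$ forces either $A$ or $C$ to be a scalar. Condition (1) rules out $C\in\mathbb{C}I$, so $A=aI$ for some $a\in\mathbb{C}$. But then $0=\omega_{C}(A)=|a|\,|\tau(C)|$, and condition (2) ($\tau(C)\neq0$) forces $a=0$, i.e.\ $A=0$. Hence the kernel is trivial and $\omega_{C}$ is a norm, completing the equivalence. I expect the only delicate point to be the differentiation step, where one must justify differentiating under $\tau$; this follows from the norm-continuity of $\tau$ together with the norm-analyticity of $t\mapsto e^{itH}$.
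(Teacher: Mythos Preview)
Your proposal is correct and follows essentially the same route as the paper's proof: both handle necessity by the same two easy observations, and for sufficiency both differentiate $t\mapsto\tau(Ce^{itH}(UAU^{*})e^{-itH})$ at $t=0$ to obtain $[UAU^{*},C]=0$ for all $U$, then invoke Theorem~\ref{1}. The only cosmetic difference is that the paper frames the sufficiency direction contrapositively (assume $\omega_{C}$ is not a norm and deduce that one of the two conditions fails), whereas you argue directly.
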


We also prove some inequalities for the $C$-numerical radius $\omega_{C}$ on finite factors (see Theorem \ref{3}).

In section 5, we discuss some properties of the $\lambda$-Aluthge transform of an invertible operator in a finite factor. Using three line theorem and some results in section 4, we obtain the following result.
\begin{proposition}[see Proposition \ref{5.3}]
Let $M$ be a finite factor with a faithful normal trace $\tau$. Assume $T\in \M$ is an invertible operator with polar decomposition $T=U|T|$ and $f$ is a polynomial, then for  $0\leq\lambda \leq 1$, $f(|T|^{\lambda}U|T|^{1-\lambda})$ is in the weak operator closure of the set $\{\sum_{i=1}^{n}z_{i}U_{i}f(T)U_{i}^{*}|~n\in \mathbb{N},(U_{i})_{1\leq i\leq n}\in \mathscr{U}(\M),\sum_{i=1}^{n}|z_{i}|\leq 1\}$.
\end{proposition}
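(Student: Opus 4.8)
The plan is to realize $f(\bigtriangleup_{\lambda}(T))$ as the value at $z=\lambda$ of an operator-valued analytic function whose boundary values are unitary conjugates of $f(T)$, and then to pass from the resulting scalar estimates to membership in the weakly closed absolutely convex hull by a duality (bipolar) argument based on the results of Section 4. Since $T$ is invertible, $|T|$ is a positive invertible element, so $\log|T|\in\M$ and we may define $|T|^{z}=\exp(z\log|T|)$ for every $z\in\mathbb{C}$; this is an entire $\M$-valued function, $|T|^{it}$ is unitary for real $t$, and $\||T|^{z}\|=\||T|^{\mathrm{Re}\,z}\|$. Set $G(z)=|T|^{z}U|T|^{1-z}$. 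Because $f$ is a polynomial, $z\mapsto f(G(z))$ is again entire and norm-bounded on the strip $S=\{z:0\le\mathrm{Re}\,z\le 1\}$, and $G(\lambda)=\bigtriangleup_{\lambda}(T)$.

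First I would carry out the duality reduction. Write $K$ for the weak operator closure of $K_{0}=\{\sum_{i=1}^{n}z_{i}U_{i}f(T)U_{i}^{*}:n\in\mathbb{N},U_{i}\in\mathscr{U}(\M),\sum_{i}|z_{i}|\le1\}$. Every element of $K_{0}$ has norm at most $\|f(T)\|$, so $K$ is a bounded, convex, balanced set; on the ball of radius $\|f(T)\|$ the weak operator and ultraweak topologies agree, hence $K$ is also ultraweakly closed and is the bipolar of $K_{0}$ relative to the duality between $\M$ and its predual $L^{1}(\M,\tau)$. For $C\in L^{1}(\M,\tau)$ one computes $\sup_{Y\in K}|\tau(CY)|=\sup_{U\in\mathscr{U}(\M)}|\tau(CUf(T)U^{*})|=\omega_{C}(f(T))$, the first equality because the supremum of a linear functional over an absolutely convex hull is attained at a single unitary with a unimodular coefficient. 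By the bipolar theorem it therefore suffices to prove the scalar inequality $|\tau(Cf(\bigtriangleup_{\lambda}(T)))|\le\omega_{C}(f(T))$ for every $C\in L^{1}(\M,\tau)$.

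To prove this inequality I would fix $C$ and apply the Hadamard three-line theorem to $h(z)=\tau(Cf(G(z)))$, which is bounded and analytic on $S$ by the remarks above. On the left edge, $G(it)=|T|^{it}U|T||T|^{-it}=|T|^{it}T|T|^{-it}=V_{t}TV_{t}^{*}$ with $V_{t}=|T|^{it}$ unitary, so $f(G(it))=V_{t}f(T)V_{t}^{*}$ and $|h(it)|\le\omega_{C}(f(T))$. On the right edge I would use that $T$ invertible forces $U$ to be unitary and that $|T|U=U^{*}TU$; then $G(1+it)=|T|^{it}|T|U|T|^{-it}=(|T|^{it}U^{*})\,T\,(U|T|^{-it})=W_{t}TW_{t}^{*}$ with $W_{t}=|T|^{it}U^{*}$ unitary, giving $f(G(1+it))=W_{t}f(T)W_{t}^{*}$ and $|h(1+it)|\le\omega_{C}(f(T))$. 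Hence both boundary suprema are bounded by $\omega_{C}(f(T))$, and the three-line theorem yields $|h(\lambda)|\le\omega_{C}(f(T))^{1-\lambda}\omega_{C}(f(T))^{\lambda}=\omega_{C}(f(T))$, which is exactly the required inequality since $h(\lambda)=\tau(Cf(\bigtriangleup_{\lambda}(T)))$.

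I expect the main obstacle to be the duality reduction rather than the complex-interpolation step: one must set up the correct pairing, verify that the weak operator closure of the bounded set $K_{0}$ coincides with its ultraweak (bipolar) closure, and identify $\sup_{Y\in K}|\tau(C\,\cdot\,)|$ with $\omega_{C}(f(T))$ using the duality and dual-norm results of Section 4. The analytic part is comparatively routine, the only delicate points being that invertibility of $T$ is genuinely used twice, namely to make $|T|^{it}$ unitary (so that $\log|T|$ is bounded) and to make $U$ unitary, so that the right-edge values are honest unitary conjugates.
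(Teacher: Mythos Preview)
Your proof is correct and follows the same two-step strategy as the paper: first obtain the scalar inequality $\omega_{C}(f(\Delta_{\lambda}(T)))\le\omega_{C}(f(T))$ via the three-line theorem, then pass to membership in $\Gamma$ by duality. In the paper these steps are packaged as Proposition~\ref{pro5} (which in turn rests on Theorem~\ref{4}, the three-line estimate with an auxiliary $B$ commuting with $T$) and the implication $(2)\Rightarrow(3)$ of Theorem~\ref{3} (Hahn--Banach separation plus an approximation of $C\in L^{1}(\M,\tau)$ by elements of $\M$).

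Your organization is somewhat more direct than the paper's on both counts. By observing that $G(z)=|T|^{z}U|T|^{1-z}$ is an honest unitary conjugate of $T$ on \emph{each} edge of the strip (using $|T|U=U^{*}TU$ on the right edge), you get $f(G(z))$ as a unitary conjugate of $f(T)$ immediately, so the passage to polynomials requires no separate argument and the $B$-commuting device of Theorem~\ref{4} is bypassed. On the duality side, you run the three-line estimate for $C\in L^{1}(\M,\tau)$ from the start and invoke the bipolar theorem, which is exactly the content of the paper's $(2)\Rightarrow(3)$ but avoids the extra approximation $C_{n}\to C$ used there. The net effect is the same proof, streamlined.
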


In this paper, we assume all the factors have separable predual.

\section{relation between weakly unitarily invariant norms and the $C$-numerical radius $\omega_{C}$ on $M_{n}(\mathbb{C})$}
In this section, a finite von Neumann algebra $(\M,\tau)$ means a finite von Neumann algebra $\M$ with a faithful normal tracial state $\tau$. Recall the definition and some properties of dual norms in \cite{fang}.

Let $|||\cdot|||$ be a norm on a finite von Neumann algebra $(\M,\tau)$. For $T\in\M$, define $$|||T|||^{\sharp}_{\M}=\sup\{|\tau(TX)|:X\in\M,|||X|||\leq 1\}.$$ When no confusion arises, we write $|||\cdot|||^{\sharp}$ instead of $|||\cdot|||^{\sharp}_{\M}$.

\begin{lemma}[\cite{fang}]$|||\cdot|||^{\sharp}$ is a norm on $(\M,\tau)$.
\end{lemma}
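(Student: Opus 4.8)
The plan is to verify the three defining properties of a norm for the functional $|||\cdot|||^{\sharp}$ directly from its definition as a supremum of the trace pairing $X\mapsto\tau(TX)$ over the unit ball $\{X\in\M:|||X|||\le1\}$. Two of the three are essentially formal. For homogeneity I would note that for $\alpha\in\mathbb{C}$ the scalar pulls out of the trace, $|\tau(\alpha TX)|=|\alpha|\,|\tau(TX)|$, so that taking the supremum gives $|||\alpha T|||^{\sharp}=|\alpha|\,|||T|||^{\sharp}$. For the triangle inequality I would use linearity of $\tau$ together with subadditivity of the supremum: since $|\tau((S+T)X)|\le|\tau(SX)|+|\tau(TX)|$ for each admissible $X$, taking suprema term by term yields $|||S+T|||^{\sharp}\le|||S|||^{\sharp}+|||T|||^{\sharp}$. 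Nonnegativity and the vanishing of $|||0|||^{\sharp}$ are immediate.

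The positive-definiteness is where the hypotheses on $(\M,\tau)$ enter, and here I would exploit that $\tau$ is faithful. Suppose $|||T|||^{\sharp}=0$. Then $\tau(TX)=0$ for every $X$ in the closed unit ball of $|||\cdot|||$, and by homogeneity (replacing an arbitrary nonzero $Y$ by $Y/|||Y|||$) in fact $\tau(TY)=0$ for every $Y\in\M$. Choosing $Y=T^{*}$ gives $\tau(TT^{*})=0$; since $TT^{*}\ge 0$ and $\tau$ is faithful, this forces $TT^{*}=0$, hence $T=0$. Thus $|||T|||^{\sharp}=0$ only for $T=0$, which settles the one nontrivial separation axiom.

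The genuine obstacle is finiteness: one must check that the supremum defining $|||T|||^{\sharp}$ is not $+\infty$, equivalently that the linear functional $X\mapsto\tau(TX)$ is bounded with respect to $|||\cdot|||$. Here I would use the standard trace estimate $|\tau(TX)|\le\|TX\|_{1}\le\|T\|\,\|X\|_{1}$, which reduces finiteness to a uniform bound $\sup\{\|X\|_{1}:|||X|||\le1\}<\infty$, i.e. to a domination $\|X\|_{1}\le C|||X|||$ of the trace norm by $|||\cdot|||$. For the class of norms considered in \cite{fang}, normalized so that $\|\cdot\|_{1}\le|||\cdot|||$, this holds with $C=1$ and yields the clean bound $|||T|||^{\sharp}\le\|T\|<\infty$. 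I expect this finiteness/boundedness step to be the main point, since it is the only place where more than formal manipulation of the supremum and faithfulness of $\tau$ is required; the remaining verifications are routine.
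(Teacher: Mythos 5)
The paper itself gives no proof of this lemma --- it is quoted verbatim from \cite{fang} --- so there is nothing internal to compare against; judged on its own, your argument is correct and is the standard one. Homogeneity, subadditivity of the supremum, and the definiteness step via $Y=T^{*}$, $\tau(TT^{*})=0$, and faithfulness of $\tau$ are all fine (you rightly use that $|||\cdot|||$ is itself a norm when rescaling $Y/|||Y|||$). You have also correctly isolated the one genuinely non-formal point: in infinite dimensions the supremum can be $+\infty$ for an arbitrary norm, and finiteness of $|||T|||^{\sharp}$ for every $T$ is in fact \emph{equivalent} to a domination $\|X\|_{1}\leq C|||X|||$ --- one direction is your estimate $|\tau(TX)|\leq\|T\|\,\|X\|_{1}$, and the converse follows from Banach--Steinhaus applied to the functionals $T\mapsto\tau(TX)$, $|||X|||\leq1$, together with the $L^{1}$-duality formula $\|X\|_{1}=\sup\{|\tau(TX)|:\|T\|\leq1\}$. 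This caveat is genuinely needed: the lemma as stated for a completely arbitrary norm on an infinite-dimensional $(\M,\tau)$ would fail (the dual would only be an extended norm), and in \cite{fang} the result is established for the class of normalized (unitarily invariant) norms, for which $\|\cdot\|_{1}\leq|||\cdot|||\leq\|\cdot\|$ holds and gives your clean bound $|||T|||^{\sharp}\leq\|T\|$. Note also that where the present paper actually uses the lemma, namely on $(M_{n}(\mathbb{C}),tr)$ in Proposition \ref{8}, finiteness is automatic by equivalence of norms in finite dimensions, so the gap you flag is harmless there. In short: your proof is complete for the setting of \cite{fang}, and your identification of the finiteness hypothesis is exactly the right refinement of the paper's somewhat loosely stated version.
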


\begin{definition}[\cite{fang}]$|||\cdot|||^{\sharp}$ is called the \emph{dual norm} of $|||\cdot|||$ on $\M$ with respect to $\tau$.
\end{definition}

\begin{definition} A norm $|||\cdot|||$ on $(\M,\tau)$  is weakly unitarily invariant if $|||UTU^{*}|||=|||T|||$  for all $T\in \M$ and $U\in \mathscr{U}(\M)$.
\end{definition}

\begin{lemma}[\cite{fang}]\label{7} If $|||\cdot|||$ is a norm on $(M_{n}(\mathbb{C}),tr)$ and $|||\cdot|||^{\sharp}$ is the dual norm with respect to $tr$, then $|||\cdot|||=|||\cdot|||^{\sharp\sharp}$.
\end{lemma}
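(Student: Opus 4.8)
The plan is to prove the two inequalities $|||T|||^{\sharp\sharp}\le|||T|||$ and $|||T|||^{\sharp\sharp}\ge|||T|||$ separately, the first being formal and the second resting on the Hahn--Banach theorem together with the nondegeneracy of the trace pairing on $M_{n}(\mathbb{C})$. First I would record the elementary duality inequality $|tr(TX)|\le|||T|||\cdot|||X|||^{\sharp}$, valid for all $T,X\in M_{n}(\mathbb{C})$; this follows from the definition of $|||\cdot|||^{\sharp}$ by normalizing $T$ and using $tr(TX)=tr(XT)$. Applying it to any $Y$ with $|||Y|||^{\sharp}\le1$ gives $|tr(TY)|\le|||T|||$, and taking the supremum over all such $Y$ yields the easy direction $|||T|||^{\sharp\sharp}=\sup\{|tr(TY)|:|||Y|||^{\sharp}\le1\}\le|||T|||$.

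For the reverse inequality, fix $T_{0}\neq0$ and regard $(M_{n}(\mathbb{C}),|||\cdot|||)$ as a finite-dimensional complex normed space. By the complex Hahn--Banach theorem there is a complex-linear functional $\phi$ with $\phi(T_{0})=|||T_{0}|||$ and $|\phi(X)|\le|||X|||$ for every $X$. The key structural input is that the bilinear trace form $(T,X)\mapsto tr(TX)$ is nondegenerate on $M_{n}(\mathbb{C})$, since $tr$ is faithful; hence there is a unique $Y\in M_{n}(\mathbb{C})$ with $\phi(X)=tr(XY)$ for all $X$. Because $|\phi(X)|\le|||X|||$, the definition of the dual norm gives $|||Y|||^{\sharp}=\sup_{|||X|||\le1}|tr(XY)|=\sup_{|||X|||\le1}|\phi(X)|\le1$. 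Then, using $tr(T_{0}Y)=\phi(T_{0})=|||T_{0}|||$, I obtain $|||T_{0}|||^{\sharp\sharp}\ge|tr(T_{0}Y)|=|||T_{0}|||$. Combining the two inequalities, and treating $T_{0}=0$ trivially, yields $|||\cdot|||=|||\cdot|||^{\sharp\sharp}$.

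I expect the only real subtlety to be bookkeeping about which variable the functional acts on: one must represent the Hahn--Banach functional in exactly the form $tr(\,\cdot\,Y)$ demanded by the definition of $|||\cdot|||^{\sharp}$, and here the trace identity $tr(XY)=tr(YX)$ makes the two conventions agree. Conceptually the statement is nothing but the reflexivity of a finite-dimensional normed space transported through the concrete isomorphism $M_{n}(\mathbb{C})\cong M_{n}(\mathbb{C})^{*}$ furnished by the normalized trace form; equivalently, it is the bipolar theorem applied to the closed, convex, balanced unit ball of $|||\cdot|||$, whose polar under the trace pairing is precisely the unit ball of $|||\cdot|||^{\sharp}$. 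Either formulation avoids any analytic difficulty, since in finite dimensions the unit ball is compact and all the suprema involved are attained.
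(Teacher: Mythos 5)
Your proof is correct. Note that the paper itself offers no proof of this lemma: it is quoted verbatim from the reference \cite{fang} (it is Lemma 6.18 there), so there is no in-paper argument to compare against. Your argument is the standard one that such a statement rests on: the easy inequality $|||T|||^{\sharp\sharp}\leq|||T|||$ from the duality bound $|tr(TY)|\leq|||T|||\,|||Y|||^{\sharp}$, and the reverse inequality via a Hahn--Banach norming functional represented through the nondegenerate trace pairing as $tr(\,\cdot\,Y)$ with $|||Y|||^{\sharp}\leq1$. All the steps check out, including the point you flag yourself (reconciling $tr(XY)$ with $tr(YX)$ via the trace identity) and the trivial case $T_{0}=0$; your closing observation that the statement is the bipolar theorem for the unit ball of $|||\cdot|||$ under the trace pairing is also accurate, and it explains why the paper's later question --- whether Proposition \ref{8} extends to finite factors --- is genuinely harder: in infinite dimensions the unit ball is no longer compact and the bidual identification is not automatic, which is precisely why the paper restricts Lemma \ref{7} to $(M_{n}(\mathbb{C}),tr)$.
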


\begin{lemma}\label{6} If $|||\cdot|||$ is a weakly unitarily invariant norm on a finite von Neumann algebra $(\M,\tau)$, then $|||\cdot|||^{\sharp}$ is also a weakly unitarily invariant norm on $(\M,\tau)$.
\end{lemma}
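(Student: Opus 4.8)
The plan is to verify the defining identity of weak unitary invariance, namely $|||VTV^{*}|||^{\sharp}=|||T|||^{\sharp}$ for every $V\in\mathscr{U}(\M)$ and $T\in\M$, directly from the definition of the dual norm, using only the trace property of $\tau$ together with the assumed weak unitary invariance of $|||\cdot|||$. Since the preceding lemma already guarantees that $|||\cdot|||^{\sharp}$ is a norm on $(\M,\tau)$, the only thing left to establish is this invariance.

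Fix a unitary $V$ and an element $T\in\M$. First I would expand the definition, $|||VTV^{*}|||^{\sharp}=\sup\{|\tau(VTV^{*}X)|:X\in\M,\ |||X|||\leq 1\}$. Then, using the tracial identity $\tau(ab)=\tau(ba)$ (valid since $\tau$ is a faithful normal tracial state on the finite von Neumann algebra), I rewrite $\tau(VTV^{*}X)=\tau(T\,V^{*}XV)$, which has the effect of transporting the conjugating unitaries off $T$ and onto the test element $X$.

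The key step is then a change of variables: set $Y=V^{*}XV$, equivalently $X=VYV^{*}$. Because $|||\cdot|||$ is weakly unitarily invariant, $|||X|||=|||VYV^{*}|||=|||Y|||$, so the map $X\mapsto V^{*}XV$ is a bijection of the closed unit ball $\{X\in\M:|||X|||\leq 1\}$ onto itself. Consequently the supremum over $\{|||X|||\leq 1\}$ coincides with the supremum over $\{|||Y|||\leq 1\}$, yielding $|||VTV^{*}|||^{\sharp}=\sup\{|\tau(TY)|:|||Y|||\leq 1\}=|||T|||^{\sharp}$, as desired.

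There is no serious obstacle here; the argument is a routine manipulation. The only points demanding care are the correct application of the trace identity and the observation that weak unitary invariance of $|||\cdot|||$ is precisely what makes the substitution preserve the unit ball. In particular, no appeal to finite-dimensionality or to the biduality statement of Lemma \ref{7} is required, so the conclusion holds verbatim for an arbitrary finite von Neumann algebra $(\M,\tau)$.
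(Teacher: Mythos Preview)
Your proof is correct and follows exactly the same approach as the paper: expand the definition of $|||VTV^{*}|||^{\sharp}$, use the trace identity $\tau(VTV^{*}X)=\tau(TV^{*}XV)$, and then invoke weak unitary invariance of $|||\cdot|||$ to see that the substitution $X\mapsto V^{*}XV$ preserves the unit ball, recovering $|||T|||^{\sharp}$. The paper's argument is essentially a one-line version of what you wrote.
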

\begin{proof}
Let $U\in \mathscr{U}(\M)$. Then $|||UTU^{*}|||^{\sharp}=\sup\{|\tau(UTU^{*}X)|:X\in\M,|||X|||\leq 1\}=\sup\{|\tau(TU^{*}XU)|:X\in\M,|||U^{*}XU|||\leq 1\}=|||T|||^{\sharp}$.
\end{proof}

We now proceed to the relation between weakly unitarily invariant norms and the $C$-numerical radius on $(M_{n}(\mathbb{C}),tr)$.
\begin{proposition}\label{8}
If $|||\cdot|||$ is a weakly unitarily invariant norm on $(M_{n}(\mathbb{C}),tr)$, then $|||T|||=\sup\limits_{|||X|||^{\sharp}\leq1}\omega_{X}(T)$.
\end{proposition}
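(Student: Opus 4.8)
The plan is to reduce everything to the bidual identity of Lemma \ref{7}, namely $|||T|||=|||T|||^{\sharp\sharp}$, which unpacks to
$$|||T|||=\sup\{\,|tr(TY)|:Y\in M_{n}(\mathbb{C}),\ |||Y|||^{\sharp}\leq 1\,\}.$$
The entire proposition will then follow by comparing the single pairing $|tr(TY)|$ against the $Y$-numerical radius $\omega_{Y}(T)=\max_{U}|tr(YUTU^{*})|$, in both directions. The two ingredients I will lean on are this bidual identity and Lemma \ref{6}, which guarantees that the dual norm $|||\cdot|||^{\sharp}$ is again weakly unitarily invariant.

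For the inequality $|||T|||\leq\sup_{|||X|||^{\sharp}\leq1}\omega_{X}(T)$, I would start from the display above. For each admissible $Y$ (that is, $|||Y|||^{\sharp}\leq1$), choosing $U=I$ in the definition of the numerical radius gives $|tr(TY)|=|tr(YITI^{*})|\leq\omega_{Y}(T)$. Taking the supremum over all such $Y$ on the left and bounding each term by $\omega_{Y}(T)$ yields $|||T|||\leq\sup_{|||X|||^{\sharp}\leq1}\omega_{X}(T)$ directly.

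For the reverse inequality, I would fix $X$ with $|||X|||^{\sharp}\leq1$ and estimate $\omega_{X}(T)$. By cyclicity of the trace, $tr(XUTU^{*})=tr((U^{*}XU)T)$, so $\omega_{X}(T)=\max_{U\in\mathscr{U}(M_{n}(\mathbb{C}))}|tr((U^{*}XU)T)|$. Now the key step: by Lemma \ref{6} the dual norm is weakly unitarily invariant, so $|||U^{*}XU|||^{\sharp}=|||X|||^{\sharp}\leq1$ for every unitary $U$. Hence each matrix $U^{*}XU$ is an admissible competitor in the bidual supremum, giving $|tr((U^{*}XU)T)|\leq|||T|||^{\sharp\sharp}=|||T|||$. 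Maximizing over $U$ shows $\omega_{X}(T)\leq|||T|||$, and then taking the supremum over all $X$ with $|||X|||^{\sharp}\leq1$ gives $\sup_{|||X|||^{\sharp}\leq1}\omega_{X}(T)\leq|||T|||$. Combining the two inequalities yields the claimed equality.

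I do not expect a genuine obstacle here, since the argument is essentially a packaging of duality plus invariance; the one place to be careful is the second step, where the weak unitary invariance of the \emph{dual} norm (rather than of the original norm) is exactly what licenses replacing $X$ by its unitary conjugate $U^{*}XU$ inside the bidual supremum. Everything else is trace cyclicity and the already-established bidual identity.
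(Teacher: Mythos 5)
Your proof is correct and takes essentially the same route as the paper's: both arguments rest on the bidual identity of Lemma \ref{7}, the weak unitary invariance of the dual norm from Lemma \ref{6}, and trace cyclicity. The only difference is presentational---you split the claim into two inequalities, while the paper writes a single chain of equalities in which the supremum over unitaries is inserted via invariance and then exchanged with the supremum over the dual unit ball.
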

\begin{proof}
For $T\in (M_{n}(\mathbb{C}),tr)$, by Lemma \ref{6}, Lemma \ref{7} and the definition of dual norm, we have
\begin{align*}
|||T|||=|||T|||^{\sharp\sharp}&=\sup\limits_{U\in \mathscr{U}(\M)}|||UTU^{*}|||^{\sharp\sharp}\\
&=\sup\limits_{U\in \mathscr{U}(\M)}\sup\limits_{|||X|||^{\sharp}\leq1}\{|\tau(TUXU^{*})|,X\in M_{n}(\mathbb{C})\}\\
&=\sup\limits_{|||X|||^{\sharp}\leq1}\sup\limits_{U\in \mathscr{U}(\M)}\{|\tau(TUXU^{*})|,X\in M_{n}(\mathbb{C})\}\\
&=\sup\limits_{|||X|||^{\sharp}\leq1}\omega_{X}(T).
\end{align*}.
\end{proof}
Note that when proving Proposition \ref{8}, we use Lemma \ref{7} \cite[Lemma 6.18]{fang}, so we may ask whether this result can be generalized to finite factors.

\section{A result on factors}
In this section, we show a technical result (Theorem \ref{1}), which is the most difficult part of this paper.
To prove that result, we first need the following lemma.
\begin{lemma}\label{5}
Let $\M$ be a factor and $P$ be a non-trivial projection in $\M$. Then the von Neumann algebra generated by $\{UPU^{*}:U\in \mathscr{U}(\M)\}$ is $\M$.
\end{lemma}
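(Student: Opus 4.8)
The goal is to show that for a non-trivial projection $P$ in a factor $\M$, the von Neumann algebra $\N$ generated by $\{UPU^{*}:U\in\mathscr{U}(\M)\}$ is all of $\M$. The plan is to exploit the key structural feature of $\N$: it is \emph{invariant under conjugation by every unitary of $\M$}, since conjugating a generator $UPU^{*}$ by $V\in\mathscr{U}(\M)$ yields $(VU)P(VU)^{*}$, which is again a generator. Thus $V\N V^{*}=\N$ for all $V\in\mathscr{U}(\M)$, and because every element of $\M$ is a linear combination of unitaries, $\N$ is a two-sided ideal-like object stable under the whole group action; more precisely $\N$ is globally invariant under the inner automorphisms of $\M$.

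First I would record that $\N$ is a von Neumann subalgebra of $\M$ containing $P$ and satisfying $V\N V^{*}=\N$ for every $V\in\mathscr{U}(\M)$. The natural next step is to look at the center of $\N$, or better, to show directly that $\N'\cap\M$ is trivial, which by the double commutant theorem forces $\N=\M$ (since $\N=\N''\supseteq(\N'\cap\M)'\cap\M$, and one wants to pin down $\N'\cap\M=\mathbb{C}I$). So I would let $B\in\N'\cap\M$, i.e. $B$ commutes with every generator $UPU^{*}$, and aim to prove $B\in\mathbb{C}I$. Since $B$ commutes with $UPU^{*}$ for all $U$, taking $U=I$ gives $BP=PB$; but we get far more, namely $B$ commutes with every unitary conjugate of $P$. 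Writing $B$ in terms of its real and imaginary parts reduces to the self-adjoint case, and then considering $UBU^{*}$ versus the relation shows $B$ must commute with a large family, which in a factor should collapse $B$ to a scalar.

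The cleanest route I expect is to use the commutation relation to show that $P$ and $UPU^{*}$ being related forces elements of $\N'\cap\M$ to commute with enough of $\M$ to lie in $\M\cap\M'=\mathbb{C}I$, using the factor hypothesis directly. Concretely, if $B$ commutes with all $UPU^{*}$, then for each $U$ we have $UPU^{*}$ commuting with $B$, equivalently $P$ commutes with $U^{*}BU$; averaging or using that the map $U\mapsto U^{*}BU$ sweeps out a conjugation-invariant set, I would argue that $\{P\}'\cap\M$ must contain a conjugation-invariant self-adjoint element spectral-projection analysis of $B$, and a non-scalar $B$ would produce a non-trivial projection in $\M'\cap\M$, contradicting factoriality. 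The main obstacle is precisely this last collapse: turning the pointwise commutation ``$B$ commutes with every $UPU^{*}$'' into ``$B$ is central in $\M$.'' I expect the decisive tool to be that the factor condition $\M\cap\M'=\mathbb{C}I$ can be applied to the spectral projections of the self-adjoint/normal parts of $B$: one shows each such spectral projection is fixed by all inner automorphisms, hence central, hence trivial, so $B$ is scalar. This yields $\N'\cap\M=\mathbb{C}I$ and therefore $\N=\M$ by taking commutants within $\M$.
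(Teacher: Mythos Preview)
Your plan has a genuine gap at the very last step. You aim to prove $\N'\cap\M=\mathbb{C}I$ and then conclude $\N=\M$ ``by taking commutants within $\M$.'' But the implication $\N'\cap\M=\mathbb{C}I\Rightarrow\N=\M$ is \emph{false} in general: irreducible proper subfactors exist (e.g.\ finite-index irreducible subfactors of $II_{1}$ factors). Your parenthetical justification, $\N=\N''\supseteq(\N'\cap\M)'\cap\M$, has the inclusion reversed: from $\N'\cap\M\subseteq\N'$ one gets $(\N'\cap\M)'\supseteq\N''=\N$, hence $(\N'\cap\M)'\cap\M\supseteq\N$, not the other way. So even a successful computation of the relative commutant would not finish the proof.

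There is also a gap earlier. You want to show that any $B$ commuting with every $UPU^{*}$ is scalar, but the sketch (``spectral projections of $B$ are fixed by all inner automorphisms, hence central'') is not justified: knowing $UBU^{*}\in\N'\cap\M$ for all $U$ does not say $UBU^{*}=B$. In fact the statement ``$B$ commutes with every unitary conjugate of $P$ implies $B\in\mathbb{C}I$'' is precisely the special case $A=P$ of the paper's Theorem~\ref{1}, whose proof \emph{uses} the present lemma; so invoking it here would be circular in this development.

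For comparison, the paper avoids commutant gymnastics entirely. It argues type by type and shows constructively that $\N$ contains \emph{all} projections of small enough trace (or rank), which generate $\M$. The trick is: given $P$, choose subprojections $P_{0}\leq P$ and $F_{0}\leq 1-P$ of equal (small) size, set $Q=P-P_{0}+F_{0}$; then $Q=VPV^{*}$ for some unitary $V$, so $Q\in\N$, and $P(1-Q)=P_{0}\in\N$. Varying $P_{0}$ and conjugating recovers every small projection, hence all of $\M$. This direct construction is what your abstract approach is missing.
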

\begin{proof}
We divide the proof into four cases according to the the type of $\M$.

(i) For the case $\M=B(\mathscr{H})$, where $dim(\mathscr{H})\leq\infty$.

Take two projections $P_{0}\leq P$ and $P_{1}\leq 1-P$ with $dim(P_{i}(H))=1$ for $i=0,1$ and write $Q=P-P_{0}+P_{1}$, then $P_{0}=P(1-Q)$ and we can find some unitary operator $V\in\mathscr{U}(\M)$ such that $VPV^{*}=Q$, since $P$ and $Q$ are equivalent. Then we have $\{UP_{0}U^{*}:U\in \mathscr{U}(\M)\}''\subseteq \{UPU^{*}:U\in \mathscr{U}(\M)\}''$. Note that the von Neumann algebra generated by $\{UP_{0}U^{*}:U\in \mathscr{U}(\M)\}$ is $\M$. Hence we prove our result.

(ii) For the case $\M$ is a $II_{1}$ factor with a faithful normal tracial state $\tau$.

Write $\tau(P)=\lambda\in(0,1)$ and we may assume $\lambda\leq\frac{1}{2}$. Then for any $0<t\leq\lambda$, we can find two projections $P_{t}\leq P$ and $F_{t}\leq 1-P$ with $\tau(P_{t})=\tau(F_{t})=t$.
Write $Q_{t}=P-P_{t}+F_{t}$, then $P_{t}=P(1-Q_{t})$. Again we can find some unitary operator $V\in\mathscr{U}(\M)$ such that $VPV^{*}=Q_{t}$. Hence
$\{UP_{t}U^{*}:\tau(P_{t})=t\in(0,\lambda],P_{t}\leq P, U\in \mathscr{U}(\M)\}''\subseteq \{UPU^{*}:U\in \mathscr{U}(\M)\}''$. Note that the von Neumann algebra generated by $\{UP_{t}U^{*}:\tau(P_{t})=t\in(0,\lambda],P_{t}\leq P, U\in \mathscr{U}(\M)\}$ is the whole $\M$. Then we have our result.

(iii) For the case $\M$ is a $II_{\infty}$ factor with a faithful normal tracial weight $Tr$.

Write $Tr(P)=\lambda\in(0,\infty]$ and we may assume $Tr(1-P)\geq Tr(P)$. Then using the same trick in case (ii), we prove our result.


(iv) For the case $\M$ is a type $III$ factor.

This case is trivial, since all the non-trivial projections in a type $III$ factor are equivalent.
\end{proof}


Our main theorem is the following.
\begin{theorem}\label{1}
Let $\M$ be a factor and $A,B\in \M$. If $UAU^{*}B=BUAU^{*}$ holds for any $U\in \mathscr{U}(\M)$, then either $A$ or $B$ is in $\mathbb{C}I$.
\end{theorem}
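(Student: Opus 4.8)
The plan is to deal first with the asymmetry of the hypothesis. The relation $UAU^{*}B = BUAU^{*}$ for all $U\in\mathscr{U}(\M)$ is equivalent, after the substitution $U\mapsto U^{*}$ followed by conjugation, to $A(VBV^{*}) = (VBV^{*})A$ for all $V\in\mathscr{U}(\M)$; thus the condition is symmetric in $A$ and $B$. I may therefore assume $A\notin\mathbb{C}I$ and try to prove $B\in\mathbb{C}I$. The guiding principle is Lemma \ref{5}: if I can locate a single non-trivial projection $P\in\M$ for which $B$ commutes with the whole orbit $\{UPU^{*}:U\in\mathscr{U}(\M)\}$, then, since this orbit is a self-adjoint set generating $\M$ as a von Neumann algebra (Lemma \ref{5}), its commutant is $\M'$; hence $B\in\M\cap\M'=\mathbb{C}I$ and we are done.

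So the whole problem reduces to manufacturing such a projection from the non-scalar element $A$. Since $B$ commutes with every $UAU^{*}$, it commutes with every product and every weak limit of such elements, i.e.\ $B$ lies in the commutant of the weakly closed unital subalgebra $\mathcal{A}\subseteq\M$ generated by the unitary orbit of $A$. Moreover $\mathcal{A}$ is invariant under every inner automorphism $\mathrm{Ad}(U)$ of $\M$, because its generating orbit is. Consequently, any projection I find inside $\mathcal{A}$ automatically has its entire orbit inside $\mathcal{A}$, and $B$ commutes with all of it --- precisely the input Lemma \ref{5} requires. The target thus becomes: \emph{the inner-invariant weakly closed algebra generated by a non-scalar element of a factor contains a non-trivial projection.}

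When $A$ is normal this is immediate and does not even need $\mathcal{A}$. A non-trivial spectral projection $P$ of $A$ lies in the bicommutant $\{A\}''$, while $B\in\{A\}'$, so $P$ commutes with $B$ by the very definition of $\{A\}''$; applying the same reasoning to the normal operator $UAU^{*}$, whose spectral projections are exactly the $UPU^{*}$, shows that $B$ commutes with the full orbit of $P$, and Lemma \ref{5} finishes the case. This is where Fuglede--Putnam silently does the work, guaranteeing that $\{A\}'$ is a genuine von Neumann algebra.

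The main obstacle is the non-normal case. Here $\mathcal{A}$ need not be self-adjoint --- equivalently, $B$ commutes with the orbit of $A$ but perhaps not with that of $A^{*}$ --- so spectral theory is no longer available to split off a projection. My plan is to manufacture one by hand: from a suitable word in the orbit elements I expect to reach a non-scalar element of $\mathcal{A}$ with disconnected spectrum, apply the holomorphic functional calculus (which stays inside $\mathcal{A}$ and uses no adjoint) to obtain a non-trivial idempotent, and then pass to a genuine projection --- either by establishing the analogue of Lemma \ref{5} for idempotents, or by exhibiting a non-scalar self-adjoint element of $\mathcal{A}$ outright. More in the spirit of Lemma \ref{5}, one could instead argue type by type ($B(\mathscr{H})$, $II_{1}$, $II_{\infty}$, $III$) that an inner-invariant weakly closed algebra in a factor admits no proper ``triangular'' structure and hence must contain a non-trivial projection. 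Proving that a non-trivial inner-invariant weakly closed algebra in a factor always contains a projection is, I expect, the technical heart of the theorem.
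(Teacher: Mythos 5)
Your reduction is sound as far as it goes: the symmetrization ($UAU^{*}B=BUAU^{*}$ for all $U$ is indeed equivalent to $A$ commuting with $VBV^{*}$ for all $V$) is correct, and so is the normal case --- $B$ commutes with each normal operator $UAU^{*}$, hence by Fuglede with its spectral projections $UPU^{*}$, and Lemma \ref{5} then forces $B\in\M\cap\M'=\mathbb{C}I$. But the non-normal case, which you yourself call ``the technical heart,'' is not an argument; it is a wish list. You never produce a non-scalar element of $\mathcal{A}$ with disconnected spectrum (no reason is given why any word in the orbit elements should have one), and even granting a non-trivial idempotent $E\in\mathcal{A}$, the passage to a projection is not routine: $\mathcal{A}$ is not self-adjoint, so $B$ commutes with $UEU^{*}$ but not, a priori, with $UE^{*}U^{*}$, and Lemma \ref{5} --- a statement about the von Neumann algebra generated by a self-adjoint family of projections --- does not apply. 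You would need the unproven claim that the weakly closed, non-self-adjoint, inner-invariant algebra generated by the orbit of a non-trivial idempotent (or of a non-scalar element) is all of $\M$, which is essentially a restatement of the theorem itself. As written, your proof covers only the case where $A$ or $B$ is normal.

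For comparison, the paper avoids this obstruction entirely by working with corners rather than with the invariant algebra $\mathcal{A}$. Fixing a projection $P$, it conjugates by unitaries of the form $e^{i\theta}P\oplus P^{\perp}$ and $V_{1}\oplus V_{2}$ to extract the relations $A_{11}B_{11}=B_{11}A_{11}$ and $A_{12}B_{21}=B_{12}A_{21}=0$, and from these the implication: if some unitary conjugate of $A$ has a non-zero off-diagonal corner, then every unitary conjugate of $B$ is diagonal with respect to $P$. That places $B$ in the commutant of the orbit of the concrete projection $P$ (no projection needs to be found inside a non-self-adjoint algebra), so Lemma \ref{5} applies directly; in the remaining case every $UAU^{*}$ commutes with $P$, and the same lemma makes $A$ scalar, a contradiction. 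The type-by-type analysis ($B(\mathscr{H})$ via the finite-dimensional matrix result, $II_{1}$ and $III$ via $\M\cong M_{2}(\mathbb{C})\otimes P\M P$, $II_{\infty}$ by reduction to the previous cases) is precisely what replaces the step you left open.
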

\begin{proof}
Let $P$ be a projection in $\M$, then we can write $A$ and $B$ in the matrix form
$A=\begin{pmatrix}
A_{11}&A_{12}\\
A_{21}&A_{22}\\
\end{pmatrix},B=\begin{pmatrix}
B_{11}&B_{12}\\
B_{21}&B_{22}\\
\end{pmatrix},$ where $A_{11},B_{11}\in P\M P$, $A_{12},B_{12}\in P\M P^{\bot}$, $A_{21},B_{21}\in P^{\bot}\M P$, $A_{22},B_{22}\in P^{\bot}\M P^{\bot}.$

Let $\theta\in[0,2\pi]$, $U=\begin{pmatrix}
e^{i\theta}P_{n}&0\\
0&P_{n}^{\perp}\\
\end{pmatrix}$, it is clear that $U$ is a unitary operator. Then we have $UAU^{*}=\begin{pmatrix}
A_{11}&e^{i\theta}A_{12}\\
e^{-i\theta}A_{21}&A_{22}\\
\end{pmatrix}$,
$$
UAU^{*}B=\begin{pmatrix}
A_{11}&e^{i\theta}A_{12}\\
e^{-i\theta}A_{21}&A_{22}\\
\end{pmatrix}\begin{pmatrix}
B_{11}&B_{12}\\
B_{21}&B_{22}\\
\end{pmatrix}=\begin{pmatrix}
A_{11}B_{11}+e^{i\theta}A_{12}B_{21}&\ast\\
\ast&\ast\\
\end{pmatrix},
$$and
$$
BUAU^{*}=\begin{pmatrix}
B_{11}&B_{12}\\
B_{21}&B_{22}\\
\end{pmatrix}\begin{pmatrix}
A_{11}&e^{i\theta}A_{12}\\
e^{-i\theta}A_{21}&A_{22}\\
\end{pmatrix}=\begin{pmatrix}
B_{11}A_{11}+e^{-i\theta}B_{12}A_{21}&\ast\\
\ast&\ast\\
\end{pmatrix}.
$$
It follows that
\begin{equation}\label{eq:1}
A_{11}B_{11}-B_{11}A_{11}+e^{i\theta}A_{12}B_{21}-e^{-i\theta}B_{12}A_{21}=0
\end{equation} since $UAU^{*}B=BUAU^{*}$.
Note that \eqref{eq:1} holds for any $\theta\in[0,2\pi]$, a not difficult calculation implies
\begin{equation}\label{eq:2}
A_{11}B_{11}=B_{11}A_{11}, A_{12}B_{21}=B_{12}A_{21}=0.
\end{equation}

Observe that for any $U,V\in \mathscr{U}(\M)$, $UVAV^{*}U^{*}B=BUVAV^{*}U^{*}$ still holds, in particular, we can choose $V=\begin{pmatrix}
V_{1}&0\\
0&P^{\perp}\\
\end{pmatrix}$, where $V_{1}\in \mathscr{U}(P\M P)$, then
\begin{equation}\label{eq:3}
V_{1}A_{11}V^{*}_{1}B_{11}=B_{11}V_{1}A_{11}V^{*}_{1}.
\end{equation}

(i) For the case $\M=B(\mathscr{H})$, where $dim(\mathscr{H})=\infty$.

For $n\in \mathbb{N}$, let $P_{n}$ be a projection of dimension $n$ and $P_{n}\leq P_{n+1}$.

By a result of finite dimension case, i.e., if $A,B\in M_{n}(\mathbb{C})$ and $UAU^{*}B=BUAU^{*}$ holds for any $U\in \mathscr{U}(M_{n}(\mathbb{C}))$, then either $A$ or $B$ is in $\mathbb{C}I_{n}$, where $I_{n}$ is the identity of $M_{n}(\mathbb{C})$(cf. proof of \cite[Proposition IV.4.4]{book}). Then by \eqref{eq:3}, we have either $A_{11}$ or $B_{11}$ is in $\mathbb{C}I_{n}$, i.e., $P_{n}AP_{n}$ or $P_{n}BP_{n}$ is in $\mathbb{C}I_{n}$, for any $n\in \mathbb{N}$. Assume $P_{n}AP_{n}$ is in $\mathbb{C}I_{n}$, while $P_{n}BP_{n}$ not. For $m>n$, if $P_{m}AP_{m}$ isn't in $\mathbb{C}I_{m}$, while $P_{m}BP_{m}$ is in $\mathbb{C}I_{m}$, that would contradict the assumption $P_{n}BP_{n}$ isn't in $\mathbb{C}I_{n}$. Hence we have for all $n\in \mathbb{N}$, $P_{n}AP_{n}$ is in $\mathbb{C}I_{n}$, which implies
$A$ is in $\mathbb{C}I$.

(ii) For the case $\M$ is a $II_{1}$ factor with trace $\tau$ or a type $III$ factor.

If $\M$ is a  $II_{1}$ factor, then assume $\tau(P)=\frac{1}{2}$. Otherwise if $\M$ is a type $III$ factor, then assume $P\neq 0$ or $P\neq 1$. Then we have $\M\cong M_{2}(\mathbb{C})\otimes P\M P$ and we can write $A,B$ in the matrix form
$$A=\begin{pmatrix}
A_{11}&A_{12}\\
A_{21}&A_{22}\\
\end{pmatrix},B=\begin{pmatrix}
B_{11}&B_{12}\\
B_{21}&B_{22}\\
\end{pmatrix}, A_{ij},B_{ij}\in P\M P,\mbox{~for~}1\leq i,j\leq 2.$$

Let $V_{1},V_{2}\in \mathscr{U}(P\M P)$ and put
$V=\begin{pmatrix}
V_{1}&0\\
0&V_{2}\\
\end{pmatrix}$, then we have $$VAV^{*}=\begin{pmatrix}
V_{1}A_{11}V^{*}_{1}&V_{1}A_{12}V^{*}_{2}\\
V_{2}A_{21}V^{*}_{1}&V_{2}A_{22}V^{*}_{2}\\
\end{pmatrix}.$$
It follows that $V_{1}A_{12}V^{*}_{2}B_{21}=0$, since $UVAV^{*}U^{*}B=BUVAV^{*}U^{*}$ for any $U,V\in \mathscr{U}(\M)$ and \eqref{eq:2}.
If $A_{12}\neq0$, then $A_{12}V^{*}_{2}B_{21}=B^{*}_{21}V_{2}A^{*}_{12}=0$ for all unitary operator $V_{2}\in\mathscr{U}(P\M P)$, which implies $B_{21}=0$.
Moreover, put $V'=\begin{pmatrix}
0&V_{1}\\
V_{2}&0\\
\end{pmatrix}$, then $$V'AV'^{*}=\begin{pmatrix}
V_{1}A_{22}V^{*}_{1}&V_{1}A_{21}V^{*}_{2}\\
V_{2}A_{12}V^{*}_{1}&V_{2}A_{11}V^{*}_{2}\\
\end{pmatrix}.$$ Using the same trick as above, we obtain that if $A_{12}\neq0$, then $B_{12}=0$. Thus we have if $A_{12}\neq0$, then $B_{21}=B_{12}=0$. Similarly, we would have if $A_{21}\neq0$, then $B_{21}=B_{12}=0$.

Observe that if we replace $A$ with $UAU^{*}$ for every $U\in \mathscr{U}(\M)$ and replace $B$ with $VBV^{*}$ for every $V\in \mathscr{U}(\M)$, then the above fact still holds.

Then we can argue as follows.

Assume that $A\notin\mathbb{C}I$, we try to show $B\in \mathbb{C}I$.

Case 1: If there exists $U\in \mathscr{U}(\M)$ such that $(UAU^{*})_{12}$ or $(UAU^{*})_{21}$ is non-zero, then from above, we know that $(VBV^{*})_{12}=(VBV^{*})_{21}=0$ for every $V\in \mathscr{U}(\M)$. Hence $VBV^{*}P=PVBV^{*}$ for every $V\in \mathscr{U}(\M)$. Then apply Lemma \ref{5} to get
$B\in \mathbb{C}I$.

Case 2: If for every $U\in \mathscr{U}(\M)$, $(UAU^{*})_{12}=(UAU^{*})_{21}=0$. Then $UAU^{*}P=PUAU^{*}$ for every $U\in \mathscr{U}(\M)$. Again using Lemma \ref{5}, we have $A\in\mathbb{C}I$, which is a contradiction. Hence this case actually does not appear under the assumption that $A\notin\mathbb{C}I$.

(iii) For the case $\M$ is a $II_{\infty}$ factor.

Note that $\M=B(\mathscr{H})\otimes \N$, where $\N$ is a $II_{1}$ factor. For any $n\in \mathbb{N}$, let $P'_{n}$ be a projection of dimension $n$ in $B(\mathscr{H})$, $I'$ be the identity of $\N$ and $P_{n}=P'_{n}\otimes I'$, then $P_{n}\M P_{n}$ is a type $II_{1}$ factor. Hence using the same trick in case (i) and the result in case (ii), our result follows.
\end{proof}

\section{ The $C$-numerical radius $\omega_{C}$ on finite factors}
In this section, we show some applications of Theorem \ref{1} and discuss some properties of the $C$-numerical radius $\omega_{C}$ on finite factors.

We use Theorem \ref{1} and the same technique in \cite[Proposition IV.4.4]{book}, to prove our next corollary, for reader's convenience, we write the proof below.
\begin{corollary}\label{2}
Let $\M$ be a finite factor with trace $\tau$. The $C$-numerical radius $\omega_{C}$ is a weakly unitarily invariant norm on $\M$ if and only if
\begin{enumerate}
\item $C$ is not a scalar multiple of $I$ and;
\item $\tau(C)\neq0$.
\end{enumerate}
\end{corollary}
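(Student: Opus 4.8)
The plan is to characterize exactly when $\omega_C$ fails to be a norm, and show this failure is equivalent to the negation of conditions (1) and (2). Since $\omega_C$ is already a weakly unitarily invariant seminorm (as observed in the introduction), it suffices to determine when it is definite, i.e.\ when $\omega_C(A)=0$ forces $A=0$. The main work is the forward direction: assuming $\omega_C$ is a norm, derive that $C\notin\mathbb{C}I$ and $\tau(C)\neq 0$. For the converse I must show that conditions (1) and (2) guarantee $\omega_C(A)>0$ whenever $A\neq 0$.

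I would first dispose of the easy implications by contraposition. If $C=cI$ is scalar, then $\omega_C(A)=\sup_U|\tau(cUAU^*)|=|c|\,|\tau(A)|$, which vanishes on any nonzero traceless operator (such operators exist in a finite factor, e.g.\ $A=P-\tau(P)I$ for a nontrivial projection $P$), so $\omega_C$ is not a norm; this forces condition (1). Similarly, if $\tau(C)=0$, then taking $A=I$ gives $\omega_C(I)=\sup_U|\tau(CUU^*)|=|\tau(C)|=0$, so again $\omega_C$ fails to be definite and condition (2) is forced. This handles the ``only if'' direction.

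For the converse, assume (1) and (2) hold and suppose toward a contradiction that $\omega_C(A)=0$ for some $A$; I must show $A\in\mathbb{C}I$, and then rule out the nonzero scalars. The condition $\omega_C(A)=0$ means $\tau(CUAU^*)=0$ for every $U\in\mathscr{U}(\M)$. The key idea, following the matrix proof in \cite[Proposition IV.4.4]{book}, is to differentiate the relation along one-parameter unitary groups $U_t=e^{itH}$ with $H=H^*$: the function $t\mapsto\tau(Ce^{itH}Ae^{-itH})$ is constant (identically zero), so its derivative at $t=0$ vanishes, giving $\tau(C(iHA-iAH))=0$, i.e.\ $\tau([C,A]H)=0$ for all self-adjoint $H$, hence $\tau([C,A]X)=0$ for all $X\in\M$. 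By faithfulness of the pairing $(Y,X)\mapsto\tau(YX)$ this yields $[C,A]=0$, so $C$ commutes with $A$. Replacing $A$ by $VAV^*$ for arbitrary $V\in\mathscr{U}(\M)$ (which also satisfies $\omega_C(VAV^*)=0$ by weak unitary invariance), the same argument gives $C\,VAV^*=VAV^*\,C$ for every $V$; equivalently $V^*CV$ commutes with $A$, so every unitary conjugate of $C$ commutes with $A$. Theorem \ref{1} then applies (with the roles of $A$ and $C$ as the two commuting-conjugate operators) to force either $C\in\mathbb{C}I$ or $A\in\mathbb{C}I$. Since condition (1) excludes $C\in\mathbb{C}I$, we conclude $A=cI$ for some scalar $c$; but then $0=\omega_C(A)=|c|\,|\tau(C)|$ and condition (2) gives $c=0$, so $A=0$. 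This shows $\omega_C$ is definite, completing the proof.

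The main obstacle I anticipate is the differentiation step and the justification that every unitary conjugate of $C$ commutes with $A$ in the infinite-dimensional von Neumann algebra setting, rather than in $M_n(\mathbb{C})$ where the argument in \cite{book} lives. One must ensure the derivative computation is valid (the map is a trigonometric-type polynomial in $t$ when $H$ is bounded, so this is routine) and, more importantly, that the conclusion is phrased so that Theorem \ref{1} can be invoked in its exact form: Theorem \ref{1} requires $UAU^*$ to commute with $B$ for \emph{all} $U$, so I must set up the hypothesis as ``$VCV^*$ commutes with $A$ for all $V$'' and identify $B=A$, $A_{\mathrm{Thm}}=C$ in the theorem's notation. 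Once the commutation is established for all unitary conjugates, Theorem \ref{1} does the heavy lifting, and the remaining scalar bookkeeping using $\tau(C)\neq 0$ is immediate.
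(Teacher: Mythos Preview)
Your proposal is correct and follows essentially the same approach as the paper: both directions are handled by the same counterexamples ($A$ traceless when $C$ is scalar, $A=I$ when $\tau(C)=0$), and the converse uses the identical differentiation argument along $e^{itH}$ to obtain $[C,A]=0$, then weak unitary invariance to upgrade this to commutation with all unitary conjugates, and finally Theorem~\ref{1}. The only cosmetic difference is that the paper organizes the converse by contraposition (assume $\omega_C$ is not a norm and deduce that $C$ is scalar or $\tau(C)=0$), whereas you argue directly; the substance is the same.
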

\begin{proof}
If $C=\lambda I$ for any $\lambda\in\mathbb{C}$, then $\omega_{C}(A)=|\lambda||\tau(A)|$, and this is zero if $\tau(A)=0$, which means $\omega_{C}$
can't be a norm on $\M$. If $\tau(C)=0$, then $\omega_{C}(I)=0$. Again $\omega_{C}$ is not a norm.

Conversely, suppose $\omega_{C}$ is not a norm on $\M$ and $\omega_{C}(A)=0$. If $A=\lambda I$ for any $\lambda\in\mathbb{C}$, this would mean that
$\tau(C)=0$. So, if $\tau(C)\neq 0$, then $A\notin\mathbb{C}I$. We claim that $C\in \mathbb{C}I$. Since $e^{itK}$ is in $\mathscr{U}(\M)$ for all $t\in\mathbb{R}$ and $K=K^{*}\in\M$, the condition $\omega_{C}(A)=0$ implies in particular that $\tau(Ce^{itK}Ae^{-itK})=0$ if $t\in\mathbb{R}$ and $K=K^{*}\in\M$. Differentiating this relation at $t=0$, one gets $\tau((AC-CA)K)=0$ for all $K=K^{*}\in\M$. Hence we obtain that $\tau((AC-CA)T)=0$ for
all $T\in\M$. Hence $AC=CA$. Note that $\omega_{C}(A)=\omega_{C}(UAU^{*})$ for all $U\in \mathscr{U}(\M)$, so that $UAU^{*}C=CUAU^{*}$ for all $U\in \mathscr{U}(\M)$. Hence the result $C$ is in $\mathbb{C}I$ follows from Theorem \ref{1}.
\end{proof}

Observe that for $A,C\in\M$, by the definition of the $C$-numerical radius $\omega_{C}$, we have $\omega_{C}(A)=\omega_{A}(C)$ and $\omega_{C}(\cdot)$ is normal on $\M$.
\begin{theorem}\label{3}
Let $\M$ be a finite factor with a faithful normal trace $\tau$. For $A,B\in \M$, the following conditions are equivalent.
\begin{enumerate}
\item $\omega_{C}(A)\leq \omega_{C}(B)$ for all operators $C\in \M$ that are not scalars and have nonzero trace;
\item $\omega_{C}(A)\leq \omega_{C}(B)$ for all operators $C\in \M$;
\item Let $K=\{\sum_{i=1}^{n}z_{i}U_{i}BU_{i}^{*}|~n\in \mathbb{N},(U_{i})_{1\leq i\leq n}\in \mathscr{U}(\M),\sum_{i=1}^{n}|z_{i}|\leq 1\}$ and $\Gamma$ be the weak operator closure of $K$. Then $A\in \Gamma$.
\end{enumerate}
\end{theorem}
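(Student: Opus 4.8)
The plan is to prove the two equivalences $(1)\Leftrightarrow(2)$ and $(2)\Leftrightarrow(3)$ separately. The implication $(2)\Rightarrow(1)$ is immediate, since $(1)$ only asks for the inequality on a smaller class of $C$. For $(1)\Rightarrow(2)$ I would use continuity in the variable $C$: for fixed $A$ the map $C\mapsto\omega_{C}(A)=\omega_{A}(C)$ is a seminorm bounded by $\|A\|\,\|C\|$, hence norm-continuous, so $C\mapsto\omega_{C}(A)-\omega_{C}(B)$ is continuous. The operators that are \emph{not} scalar and have nonzero trace are exactly $\M\setminus(\mathbb{C}I\cup\ker\tau)$, and $\mathbb{C}I\cup\ker\tau$ is a union of two proper closed subspaces, hence nowhere dense; its complement is therefore dense. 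A continuous function that is $\le 0$ on a dense set is $\le 0$ everywhere, which yields $(2)$.

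The substance is the equivalence $(2)\Leftrightarrow(3)$, and the key computation is the following. The set $K$ is convex, balanced, and bounded by $\|B\|$, so its weak operator closure $\Gamma$ coincides with its $\sigma$-weak closure (on bounded convex sets these closures agree), and $\Gamma$ is $\sigma$-weakly closed, convex, balanced and unitarily invariant (each $V(\sum_i z_i U_iBU_i^{*})V^{*}=\sum_i z_i(VU_i)B(VU_i)^{*}$ again lies in $K$). For any $X\in L^{1}(\M,\tau)$ the functional $\tau(X\,\cdot)$ is $\sigma$-weakly continuous, and optimizing first over the coefficients $z_i$ with $\sum_i|z_i|\le 1$ and then over the unitaries gives
$$\sup_{Y\in\Gamma}|\tau(XY)|=\sup_{Y\in K}|\tau(XY)|=\sup_{U\in\mathscr{U}(\M)}|\tau(XUBU^{*})|=\omega_{X}(B).$$

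For $(3)\Rightarrow(2)$ I would use unitary invariance: if $A\in\Gamma$ then $UAU^{*}\in\Gamma$ for every $U$, so for any $C\in\M$ we get $|\tau(CUAU^{*})|\le\sup_{Y\in\Gamma}|\tau(CY)|=\omega_{C}(B)$, and taking the supremum over $U$ gives $\omega_{C}(A)\le\omega_{C}(B)$. For the converse $(2)\Rightarrow(3)$ I would argue by the bipolar theorem in the duality between $\M$ and its predual $L^{1}(\M,\tau)$: since $\Gamma$ is $\sigma$-weakly closed, convex and balanced, $A\in\Gamma$ holds if and only if $|\tau(XA)|\le\sup_{Y\in\Gamma}|\tau(XY)|=\omega_{X}(B)$ for every $X\in L^{1}(\M,\tau)$. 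Now for $X\in\M$ we have $|\tau(XA)|\le\omega_{X}(A)\le\omega_{X}(B)$, the first inequality by taking $U=I$ in the definition of $\omega_{X}(A)$ and the second by $(2)$; since both $X\mapsto|\tau(XA)|$ and $X\mapsto\omega_{X}(B)$ are $\|\cdot\|_{1}$-continuous (each is dominated by a constant times $\|X\|_{1}$) and $\M$ is dense in $L^{1}(\M,\tau)$, this inequality extends to all $X\in L^{1}(\M,\tau)$, and the bipolar criterion forces $A\in\Gamma$.

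The main obstacle I anticipate is the careful handling of topologies in the separation step: one must confirm that the weak operator closure of the bounded convex set $K$ equals its $\sigma$-weak closure, so that a genuine \emph{normal} separating functional is obtained, i.e.\ one represented by an element $X$ of the predual $L^{1}(\M,\tau)$ via $\tau(X\,\cdot)$ as recorded in the introduction. The secondary technical point is the passage from $X\in\M$ to $X\in L^{1}(\M,\tau)$, which requires verifying the $\|\cdot\|_{1}$-Lipschitz bounds on both sides; these follow from $|\tau(XUBU^{*})|\le\|X\|_{1}\|B\|$, but should be stated explicitly so that density of $\M$ in $L^{1}(\M,\tau)$ can be invoked legitimately.
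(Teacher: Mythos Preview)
Your argument is correct and follows the same separation/duality strategy as the paper for the core equivalence $(2)\Leftrightarrow(3)$: both use that $\Gamma$ is a bounded convex set whose weak-operator and $\sigma$-weak closures agree, so that a separating functional is normal and hence of the form $\tau(X\,\cdot)$ with $X\in L^{1}(\M,\tau)$, and both reduce to $X\in\M$ by an $\|\cdot\|_{1}$-approximation. Your presentation via the bipolar theorem and the explicit computation $\sup_{Y\in\Gamma}|\tau(XY)|=\omega_{X}(B)$ is a clean repackaging of the paper's contradiction argument. For $(1)\Rightarrow(2)$ the paper approximates the two exceptional families explicitly (taking $C_{n}=C+\tfrac{1}{n}I$ when $\tau(C)=0$, and $C_{n}=P+(1-\tfrac{1}{n})(I-P)$ when $C$ is scalar), whereas you invoke once and for all that $\mathbb{C}I\cup\ker\tau$ is nowhere dense and $C\mapsto\omega_{C}(A)-\omega_{C}(B)$ is norm-continuous; these are the same idea in different clothing. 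The one substantive difference is in closing the cycle: the paper proves only $(3)\Rightarrow(1)$ and for this invokes Corollary~\ref{2} (so ultimately Theorem~\ref{1}) to know that $\omega_{C}$ is a genuine norm for the admissible $C$, whereas your direct proof of $(3)\Rightarrow(2)$ via unitary invariance of $\Gamma$ and the identity $\sup_{Y\in\Gamma}|\tau(CY)|=\omega_{C}(B)$ needs no such input. In that respect your route is slightly more self-contained, at the cost of spelling out the bipolar formulation and the $L^{1}$-density step that the paper leaves more implicit.
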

\begin{proof}
$(1)\Rightarrow(2)$.
Assume $C\in \M$ and $\tau(C)=0$. Put $C_{n}=C+\frac{1}{n}$, then $\tau(C_{n})=\frac{1}{n}$ and $\|C_{n}-C\|\rightarrow 0$.
Moreover, we have \begin{align*}
|\omega_{A}(C_{n})-\omega_{A}(C)|&\leq\sup\limits_{U\in \mathscr{U}(\M)}|\tau(AU(C_{n}-C)U^{*})| \\
&=\sup\limits_{U\in \mathscr{U}(\M)}\frac{1}{n}|\tau(A)|\\
&\rightarrow 0.
\end{align*}

Similarly, we would have $\omega_{B}(C_{n})\rightarrow\omega_{B}(C)$. Note that $\omega_{A}(C_{n})\leq \omega_{B}(C_{n})$, then we have $\omega_{A}(C)\leq \omega_{B}(C)$.

Let $P\in \M$ be a projection with trace not equal to 0 or 1. Let $C_{n}=P+(1-\frac{1}{n})(1-P)$, then $C_{n}$ is not a scalar, $\tau(C_{n})\neq0$ and
$\|C_{n}-1\|\rightarrow 0$.
Hence we have $\omega_{A}(C_{n})\leq \omega_{B}(C_{n})$ and for any operator $T\in\M$,
\begin{align*}
|\omega_{T}(C_{n})-\omega_{T}(I)|&\leq|\omega_{T}(C_{n}-I)|\\
&=\sup\limits_{U\in \mathscr{U}(\M)}|\tau(TU(C_{n}-I)U^{*})|\\
&\leq \|C_{n}-1\|\|T\|_{1}\\
&\rightarrow 0.
\end{align*}
It follows that $\omega_{A}(I)\leq \omega_{B}(I)$.

$(2)\Rightarrow(3)$.
Assume $A\notin \Gamma$, then there exists a linear normal functional $f$ on $\M$ and $a>b$, such that
$\textmd{Re~}f(A)\geq a> b \geq \textmd{Re~}f(D),~\forall~ D\in\Gamma.$
Since  $f$ is a normal linear functional on $\M$, there exists a $C\in L^{1}(\M,\tau)$ such that $f(T)=\tau(CT)$ for all $T\in \M$.

Observe that
$\omega_{C}(A)=\sup\limits_{U\in \mathscr{U}(\M)}|\tau(CUAU^{*})|\geq|\tau(CA)|=|f(A)|$ and
$$\textmd{Re~} f(A)>\sup\limits_{D\in \Gamma}\textmd{Re~}f(D)\geq\sup\limits_{\theta,U}\textmd{Re~}f(e^{i\theta}UBU^{*})=\sup\limits_{U\in\mathscr{U}(\M)}|f(UBU^{*})|=\omega_{C}(B).$$
Let $C=V|C|$ be the polar decomposition of $C$ in $L^{1}(\M,\tau)$ and $H_{n}=\chi_{[0,n]}(|C|)|C|$, then $\|H_{n}-|C|\|_{1}\rightarrow 0$.
Put $C_{n}=VH_{n}$. Then we have

\begin{align*}
|\omega_{C_{n}}(A)-\omega_{C}(A)|&=|\omega_{A}(C_{n})-\omega_{A}(C)|\\
&\leq\sup\limits_{U\in \mathscr{U}(\M)}|\tau((C_{n}-C)UAU^{*})|\\
&\leq\|C_{n}-C\|_{1}\|A\|\\
&\rightarrow 0.
\end{align*}
Similarly, $|\omega_{C_{n}}(B)-\omega_{C}(B)|\rightarrow 0$. Hence there exists $m\in \mathbb{N}$ such that $\omega_{C_{m}}(A)>\omega_{C_{m}}(B)$, which contradicts to (3) since $C_{m}\in \M$.

$(3)\Rightarrow(1)$.

For all operators $C\in \M$ that are not scalars and have nonzero trace, by Corollary \ref{2}, we obtain that $\omega_{C}$ is a norm, hence
$\omega_{C}(T)\leq \omega_{C}(B)$ for all $T\in K$. Hence our result follows since $\omega_{C}$ is normal.
\end{proof}

 \begin{remark}If $|||\cdot|||$ is a weakly unitarily invariant norm on $(M_{n}(\mathbb{C}),tr)$. By Theorem \ref{3} and Proposition \ref{8}, we have \cite[Theorem IV.4.7]{book}.
 \end{remark}

\section{$\lambda$-Aluthge transform of an invertible operator in a finite factor}
Let $T\in B(\mathscr{H})$ and let $T=U|T|$ be its polar decomposition. The Aluthge transform of $T$ is the operator $\bigtriangleup(T)=|T|^{\frac{1}{2}}U|T|^{\frac{1}{2}}$.
The $\lambda$-Aluthge transform of $T$ is defined by $\bigtriangleup_{\lambda}(T)=|T|^{\lambda}U|T|^{1-\lambda} $, $0\leq\lambda \leq 1$.

In this section, we show some results on the $\lambda$-Aluthge transform of an invertible operator in a finite factor.

 For the infinite factor $B(\mathscr{H})$, Okubo in \cite{Okubo2} proved that if $T\in B(\mathscr{H})$ is an invertible operator, then for any polynomial $f$, $0\leq\lambda \leq 1$ and $\|\cdot\|$ a weakly unitarily invariant norm, we have $\|f(\bigtriangleup_{\lambda}(T))\|\leq\|f(T)\|$. Note that the $C$-numerical radius is a weakly unitarily invariant seminorm on a finite factor $\M$ and we have already given an equivalent condition for the situation that when this seminorm is a norm in section 4.

The idea of proving the following theorem comes from \cite{Okubo2}.
\begin{theorem}\label{4}
Let $\M$ be a finite factor with a faithful normal trace $\tau$, $T\in \M$ be an invertible operator with polar decomposition $T=U|T|$ and $B\in \M$ commute with T. Let $\omega_{C}(\cdot)$ be the $C$-numerical radius on $\M$. Then
\begin{equation}\omega_{C}(|T|^{\lambda}BU|T|^{1-\lambda})\leq \omega_{C}(BT),\mbox{~~for~~}0\leq\lambda\leq1.\end{equation}
\end{theorem}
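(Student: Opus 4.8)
The plan is to prove the inequality by a complex-interpolation argument on the strip $\{z\in\mathbb{C}:0\le\operatorname{Re}z\le1\}$, applying the Hadamard three lines theorem to scalar functions built from $\tau$ and then taking a supremum over unitaries. Since $T=U|T|$ is invertible in the finite factor $\M$, the operator $|T|$ is positive and invertible and $U$ is a unitary; consequently $\log|T|$ is a bounded self-adjoint element, and the operator-valued function $z\mapsto|T|^{z}:=e^{z\log|T|}$ is entire and norm-bounded on the closed strip, its norm depending only on $\operatorname{Re}z$ since $\||T|^{z}\|=\sup\{e^{s\operatorname{Re}z}:s\in\operatorname{sp}(\log|T|)\}$. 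I would therefore set
$$F(z)=|T|^{z}\,BU\,|T|^{1-z},\qquad 0\le\operatorname{Re}z\le1,$$
which is analytic in the interior and bounded on the closure, with $F(0)=BU|T|=BT$ and $F(\lambda)=|T|^{\lambda}BU|T|^{1-\lambda}$, the operator to be estimated.

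Next I would identify the boundary values of $F$ as unitary conjugates of $BT$. On the line $\operatorname{Re}z=0$, writing $z=it$, one has $F(it)=|T|^{it}(BU|T|)|T|^{-it}=|T|^{it}(BT)|T|^{-it}$, a conjugate of $BT$ by the unitary $|T|^{it}$. On the line $\operatorname{Re}z=1$, writing $z=1+it$, the commutativity hypothesis enters: since $|T|=U^{*}T$ one gets $|T|BU=U^{*}(TB)U=U^{*}(BT)U$, whence $F(1+it)=|T|^{it}U^{*}(BT)U|T|^{-it}=W_{t}(BT)W_{t}^{*}$ with $W_{t}=|T|^{it}U^{*}$ unitary. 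Because $\omega_{C}$ is a weakly unitarily invariant seminorm, it follows that $\omega_{C}(F(it))=\omega_{C}(F(1+it))=\omega_{C}(BT)$ for every real $t$.

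Finally I would fix $W\in\mathscr{U}(\M)$ and consider the scalar function $g_{W}(z)=\tau\big(CWF(z)W^{*}\big)$, which is analytic and bounded on the strip by the remarks above. For each $t$, the defining supremum of the $C$-numerical radius gives $|g_{W}(it)|\le\omega_{C}(F(it))=\omega_{C}(BT)$ and likewise $|g_{W}(1+it)|\le\omega_{C}(BT)$. The three lines theorem then yields
$$|g_{W}(\lambda)|\le\omega_{C}(BT)^{1-\lambda}\,\omega_{C}(BT)^{\lambda}=\omega_{C}(BT)$$
for $0\le\lambda\le1$. Since $W$ was arbitrary, taking the supremum over $W\in\mathscr{U}(\M)$ gives $\omega_{C}(F(\lambda))=\sup_{W}|g_{W}(\lambda)|\le\omega_{C}(BT)$, which is the claim.

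I expect the main obstacle to be the analytic bookkeeping rather than the algebra: one must verify carefully that $z\mapsto|T|^{z}$ is genuinely norm-analytic and uniformly bounded on the closed strip, which is exactly where the invertibility of $T$ is used, to keep $\log|T|$ bounded, so that each $g_{W}$ satisfies the hypotheses of the three lines theorem. The boundary computations are routine once the identity $|T|BU=U^{*}(BT)U$ is observed, and the weak unitary invariance of $\omega_{C}$ together with the definition as a supremum over $\mathscr{U}(\M)$ does the rest.
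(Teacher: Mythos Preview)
Your proposal is correct and follows essentially the same approach as the paper: both define an operator-valued function on a vertical strip, apply the three lines theorem to the scalar functions $\tau(CW\,\cdot\,W^{*})$, use the identity $|T|BU=U^{*}(BT)U$ (from $BT=TB$) together with the weak unitary invariance of $\omega_{C}$ to evaluate the boundary values as $\omega_{C}(BT)$, and finish by taking the supremum over unitaries. The only cosmetic difference is that you parametrize the strip as $\{0\le\operatorname{Re}z\le1\}$ with $F(z)=|T|^{z}BU|T|^{1-z}$, whereas the paper uses $\{-\tfrac12\le\operatorname{Re}z\le\tfrac12\}$ with $\phi(z)=|T|^{\frac12-z}BU|T|^{\frac12+z}$; these are related by an affine change of variable and lead to identical conclusions.
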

\begin{proof}
On the strip $\{z:-\frac{1}{2}\leq Re(z) \leq \frac{1}{2}\}$, consider the operator-valued function $\phi(z)$ defined by
$$\phi(z)=|T|^{\frac{1}{2}-z}BU|T|^{\frac{1}{2}+z}.$$
It is clear that $\phi(z)$ is analytic in the interior of the strip.

For any $U\in \mathscr{U}(\M)$, define $f_{U}(z)=\tau(CU\phi(z)U^{*})$. Then $f_{U}(z)$ is uniformly bounded on the strip \color{black}and analytic since $\tau$ is linear and $\phi(z)$ is analytic. Applying three line theorem (see \cite[pp. 136-137]{IC}) to $f_{U}(z)$ we would obtain that the function
$$x\mapsto Log \sup\limits_{ y\in \mathbb{R}}|f_{U}(x+iy)|\mbox{~is~a~convex~function~on~}[-\frac{1}{2},\frac{1}{2}].$$
Put $F_{U}(x)=Log \sup\limits_{ y\in \mathbb{R}}|f_{U}(x+iy)|$, then for $-\frac{1}{2}\leq x \leq \frac{1}{2},$
$$F_{U}(x)\leq F_{U}(-\frac{1}{2})(x+\frac{1}{2})+ F_{U}(\frac{1}{2})(\frac{1}{2}-x),$$
so that

\begin{equation}
\sup\limits_{U\in \mathscr{U}(\M)}F_{U}(x)\leq\sup\limits_{U\in \mathscr{U}(\M)}F_{U}(-\frac{1}{2})(x+\frac{1}{2})+ \sup\limits_{U\in \mathscr{U}(\M)}F_{U}(\frac{1}{2})(\frac{1}{2}-x).\end{equation}

For $-\infty <y <\infty$, since $|T|^{\pm iy}$ is a unitary operator and $\phi(\frac{1}{2}+iy)=|T|^{-iy}BU|T||T|^{iy}$ and
$\omega_{C}(\cdot)$ is a weakly unitarily invariant seminorm on $M$, we have
$\omega_{C}(\phi(\frac{1}{2}+iy))=\omega_{C}(BU|T|).$ Note that
$$\phi(-\frac{1}{2}+iy)=|T|^{-iy}|T|BU|T|^{iy}=|T|^{-iy}U^{*}U|T|BU|T|^{iy},$$
by using the commutativity of $T$ and $B$, we have
$\omega_{C}(\phi(-\frac{1}{2}+iy))=\omega_{C}(BU|T|).$

Note that
\begin{align*}
\sup\limits_{U\in \mathscr{U}(\M)}F_{U}(-\frac{1}{2})&=\sup\limits_{U\in \mathscr{U}(\M)}Log \sup\limits_{ y\in \mathbb{R}}|f_{U}(-\frac{1}{2}+iy)|\\
&=Log \sup\limits_{ y\in \mathbb{R}}\sup\limits_{U\in \mathscr{U}(\M)}|f_{U}(-\frac{1}{2}+iy)|\\
&=Log \sup\limits_{ y\in \mathbb{R}}\sup\limits_{U\in \mathscr{U}(\M)}|\tau(CU\phi(-\frac{1}{2}+iy)U^{*})|\\
&=Log \sup\limits_{ y\in \mathbb{R}}\omega_{C}(\phi(-\frac{1}{2}+iy))\\
&=Log \omega_{C}(BU|T|).
\end{align*}

Similarly,
\begin{align*}
\sup\limits_{U\in \mathscr{U}(\M)}F_{U}(\frac{1}{2})&=\sup\limits_{U\in \mathscr{U}(\M)}Log \sup\limits_{ y\in \mathbb{R}}|f_{U}(\frac{1}{2}+iy)|\\
&=Log \sup\limits_{ y\in \mathbb{R}}\sup\limits_{U\in \mathscr{U}(\M)}|f_{U}(\frac{1}{2}+iy)|\\
&=Log \sup\limits_{ y\in \mathbb{R}}\sup\limits_{U\in \mathscr{U}(\M)}|\tau(CU\phi(\frac{1}{2}+iy)U^{*})|\\
&=Log \sup\limits_{ y\in \mathbb{R}}\omega_{C}(\phi(\frac{1}{2}+iy))\\
&=Log \omega_{C}(BU|T|).
\end{align*}

Then inequality (5.2) implies that for $-\frac{1}{2}\leq x \leq \frac{1}{2},$

\begin{align*}
\sup\limits_{U\in \mathscr{U}(\M)}F_{U}(x)&=\sup\limits_{U\in \mathscr{U}(\M)}Log \sup\limits_{ y\in \mathbb{R}}|f_{U}(x+iy)|\\
&=Log \sup\limits_{ y\in \mathbb{R}}\omega_{C}(\phi(x+iy))\\
&\leq Log\omega_{C}(BT),
\end{align*}
which means that $\omega_{C}(\phi(x+iy))\leq \omega_{C}(BT)$, $-\frac{1}{2}\leq x \leq \frac{1}{2},-\infty <y <\infty$, hence
$$\omega_{C}(|T|^{\lambda}BU|T|^{1-\lambda})\leq \omega_{C}(BT),\mbox{~~for~~}0\leq\lambda\leq1.$$
\end{proof}

The proof of the following proposition is exactly the same as \cite[Proposition 4]{Okubo2}, so we state it as follows without a proof.
\begin{proposition}\label{pro5}
Let $\M$ be a finite factor with a faithful normal trace $\tau$, $T\in \M$ be an invertible operator with polar decomposition $T=U|T|$. Let $\omega_{C}(\cdot)$ be the $C$-numerical radius  on $\M$ and $f(x)$ be a polynomial. Then
$$\omega_{C}(f(|T|^{\lambda}U|T|^{1-\lambda}))\leq \omega_{C}(f(T)),\mbox{~~for~~}0\leq\lambda\leq1.$$
\end{proposition}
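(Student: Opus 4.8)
The plan is to reduce the polynomial inequality to the linear inequality already established in Theorem \ref{4}, by first rewriting both $f(|T|^{\lambda}U|T|^{1-\lambda})$ and $f(T)$ in a matched algebraic form, and then re-running the three line theorem argument from the proof of Theorem \ref{4} with one extra constant term. Throughout I will use that, since $T$ is invertible in the finite factor $\M$, the partial isometry $U$ in $T=U|T|$ is a unitary and each $|T|^{it}$ ($t\in\mathbb{R}$) is a unitary; in particular $U|T|=T$.

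The first step is a purely algebraic identity for the powers of the $\lambda$-Aluthge transform: I claim that for every $n\ge 1$,
\[
(|T|^{\lambda}U|T|^{1-\lambda})^{n}=|T|^{\lambda}T^{n-1}U|T|^{1-\lambda}.
\]
This follows by induction on $n$; the case $n=1$ is immediate, and in the inductive step the two inner factors combine as $|T|^{1-\lambda}|T|^{\lambda}=|T|$, after which $U|T|=T$ absorbs one further power of $T$. Writing $f(x)=a_{0}+xg(x)$ for the polynomial $g(x)=(f(x)-f(0))/x$, the identity gives
\[
f(|T|^{\lambda}U|T|^{1-\lambda})=a_{0}I+|T|^{\lambda}g(T)U|T|^{1-\lambda},
\qquad
f(T)=a_{0}I+g(T)\,T,
\]
where $B:=g(T)$ commutes with $T$. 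Thus the target inequality has exactly the shape handled by Theorem \ref{4}, apart from the harmless scalar $a_{0}I$; when $a_{0}=0$ one may invoke that theorem directly.

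The second step is the analytic argument for general $a_{0}$. On the strip $\{z:-\tfrac{1}{2}\le \mathrm{Re}(z)\le\tfrac{1}{2}\}$ I would set
\[
\Phi(z)=a_{0}I+|T|^{\frac{1}{2}-z}BU|T|^{\frac{1}{2}+z},
\]
which is bounded and analytic in the interior because $|T|^{\pm1}$ is bounded, and which satisfies $\Phi(\tfrac{1}{2}-\lambda)=f(|T|^{\lambda}U|T|^{1-\lambda})$. Following Theorem \ref{4}, for each $V\in\mathscr{U}(\M)$ one applies the three line theorem to $f_{V}(z)=\tau(CV\Phi(z)V^{*})$ and takes the supremum over $V$. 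The only new point is to recompute the two boundary suprema with the constant present. Since $a_{0}$ is a scalar it is fixed by every unitary conjugation, so at the right edge $\Phi(\tfrac{1}{2}+iy)=|T|^{-iy}f(T)|T|^{iy}$, while using $U|T|=T$ and $BT=TB$ exactly as in Theorem \ref{4} one obtains $\Phi(-\tfrac{1}{2}+iy)=(U|T|^{iy})^{*}f(T)(U|T|^{iy})$. As $\omega_{C}$ is weakly unitarily invariant, both boundary values have $\omega_{C}$-value equal to $\omega_{C}(f(T))$, and the convexity supplied by the three line theorem then forces $\omega_{C}(\Phi(x+iy))\le\omega_{C}(f(T))$ on the whole strip; evaluating at $z=\tfrac{1}{2}-\lambda$ yields the assertion.

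The main obstacle I anticipate is bookkeeping rather than conceptual: verifying the power identity, and above all confirming that inserting $a_{0}I$ does not disturb the two boundary computations. This is exactly where invertibility is essential, since it guarantees that $U$ and each $|T|^{iy}$ are unitaries, so that $a_{0}I$ is invariant under the relevant conjugations and can be carried along with $f(T)$ at both edges of the strip.
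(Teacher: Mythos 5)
Your proof is correct and is essentially the argument the paper intends: the paper omits the proof of Proposition~\ref{pro5}, deferring to Okubo's Proposition~4, and your reduction $f(|T|^{\lambda}U|T|^{1-\lambda})=a_{0}I+|T|^{\lambda}g(T)U|T|^{1-\lambda}$ followed by re-running the three line theorem of Theorem~\ref{4} with the scalar carried along is precisely that argument. You also correctly identified the one substantive point hidden in the phrase ``exactly the same proof'': Theorem~\ref{4} as stated only covers the case $a_{0}=0$, so the two boundary computations $\Phi(\tfrac{1}{2}+iy)=|T|^{-iy}f(T)|T|^{iy}$ and $\Phi(-\tfrac{1}{2}+iy)=(U|T|^{iy})^{*}f(T)(U|T|^{iy})$ must be redone with the constant present, which your verification handles correctly.
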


Applying Theorem \ref{3} and Proposition \ref{pro5}, we can obtain that
\begin{proposition}\label{5.3}Let $\M$ be a finite factor with a faithful normal trace $\tau$. Assume $T\in \M$ is an invertible operator with polar decomposition $T=U|T|$ and $f$ is a polynomial, then for $0\leq\lambda\leq1$, $f(|T|^{\lambda}U|T|^{1-\lambda})$ is in the weak operator closure of the set $\{\sum_{i=1}^{n}z_{i}U_{i}f(T)U_{i}^{*}|~n\in \mathbb{N},(U_{i})_{1\leq i\leq n}\in \mathscr{U}(\M),\sum_{i=1}^{n}|z_{i}|\leq 1\}$.
 \end{proposition}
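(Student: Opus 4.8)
The plan is to recognize that this statement is an immediate consequence of the machinery already assembled, namely Proposition \ref{pro5} together with the implication $(2)\Rightarrow(3)$ of Theorem \ref{3}. First I would set
$$A=f(|T|^{\lambda}U|T|^{1-\lambda})=f(\bigtriangleup_{\lambda}(T)),\qquad B=f(T),$$
and check that both lie in $\M$: since $T$ is invertible its polar decomposition $T=U|T|$ has $U\in\mathscr{U}(\M)$ and $|T|$ a positive invertible element, so $\bigtriangleup_{\lambda}(T)=|T|^{\lambda}U|T|^{1-\lambda}\in\M$, and applying a polynomial keeps us inside $\M$. Thus $A$ and $B$ are legitimate inputs for Theorem \ref{3}.

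Next I would invoke Proposition \ref{pro5}, which gives $\omega_{C}(f(\bigtriangleup_{\lambda}(T)))\le\omega_{C}(f(T))$, i.e. $\omega_{C}(A)\le\omega_{C}(B)$, for every $C\in\M$ and every $0\le\lambda\le1$. Because $C$ here ranges over all of $\M$, this is precisely condition $(2)$ of Theorem \ref{3} for the pair $(A,B)$. I would then apply the implication $(2)\Rightarrow(3)$ of that theorem. With $B=f(T)$, the set $K$ appearing in condition $(3)$ becomes
$$K=\Big\{\textstyle\sum_{i=1}^{n}z_{i}U_{i}f(T)U_{i}^{*}\ \Big|\ n\in\mathbb{N},\ (U_{i})_{1\le i\le n}\in\mathscr{U}(\M),\ \textstyle\sum_{i=1}^{n}|z_{i}|\le1\Big\},$$
and condition $(3)$ asserts exactly that $A=f(\bigtriangleup_{\lambda}(T))$ lies in the weak operator closure $\Gamma$ of this $K$. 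That is the conclusion of the proposition, so the argument closes.

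As for where the real difficulty sits, there is no new obstacle in this final deduction: it is a direct pattern-match onto Theorem \ref{3}. The substantive content has already been discharged earlier — in the three-line-theorem argument behind Theorem \ref{4} (and hence Proposition \ref{pro5}), and in the Hahn--Banach separation argument establishing $(2)\Rightarrow(3)$ of Theorem \ref{3}. The only point that deserves a moment's care is confirming that Proposition \ref{pro5} holds for \emph{every} $C\in\M$, and not merely for those $C$ making $\omega_{C}$ a norm, so that it is condition $(2)$ rather than condition $(1)$ that we verify; since Proposition \ref{pro5} is phrased for an arbitrary $C$, this is automatic and no delicate case analysis is needed.
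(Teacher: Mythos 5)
Your proposal is correct and coincides with the paper's own (unwritten) proof: the paper derives Proposition \ref{5.3} exactly by combining Proposition \ref{pro5} (which holds for arbitrary $C\in\M$) with the implication $(2)\Rightarrow(3)$ of Theorem \ref{3} applied to $A=f(\bigtriangleup_{\lambda}(T))$ and $B=f(T)$. Your remark that one should verify condition $(2)$ rather than $(1)$ is also the right point of care, and it is handled just as you say.
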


\proof[Acknowledgements]
The second author was supported by the Project sponsored by the NSFC grant 11431011 and startup funding from Hebei Normal University. The authors wish to express their thanks to Yongle Jiang for his carefully reading the draft of this paper and providing valuable suggestions
 and comments.

\bibliographystyle{amsplain}

\begin{thebibliography}{10}
\bibitem{AA}Aluthge, A., \newblock {\em On p-hyponormal operators for $0<p<1$.} \newblock {Integral Equations Operator Theory} {\bf13} (1990), no. 3, 307-315.

\bibitem{APS}Antezana, J.; Pujals, E. R.; Stojanoff, D.,  \newblock {\em The iterated Aluthge transforms of a matrix converge.} \newblock {Adv. Math.} {\bf226} (2011), no. 2, 1591-1620.


\bibitem{book} Bhatia, R., \newblock {\em Matrix Analysis.} Graduate Texts in Mathematics, 169. Springer-Verlag, New York, 1997. xii+347 pp.

\bibitem{CM} Chabbabi, F.; Mbekhta, M., \newblock {\em  General product nonlinear maps commuting with the $\lambda$-Aluthge transform.} \newblock {Mediterr. J. Math.} {\bf14} (2017), no. 2, Art. 42, 10 pp.

\bibitem{DS} Dykema, K.; Schultz, H., \newblock {\em Brown measure and iterates of the Aluthge transform for some operators arising from measurable actions.} \newblock {Trans. Amer. Math. Soc.} {\bf361} (2009), no. 12, 6583-6593.

\bibitem{K}
Fan, K.,
\newblock {\em Maximum properties and inequalities for the eigenvalues of completely continuous operators}.
\newblock {Proc. Natl. Acad. Sci. USA},  {\bf37} (1951), pp. 760-766.


\bibitem{fang} Fang, J.; Hadwin, D.; Nordgren, E.; Shen, J., \newblock {\em Tracial gauge norms on finite von Neumann algebras satisfying the weak Dixmier property.} \newblock {J. Funct. Anal.} {\bf225} (2008), no.1, 142-183.

\bibitem{IC} Gohberg, I.C.; Kre\u\i n, M.G.,
 \newblock {\em Introduction to the Theory of Linear Nonselfadjoint Operators}.
\newblock {Transl. Math. Monogr.}, vol. 18, Amer. Math. Soc., Providence, RI (1969).


\bibitem{E} Hewitt, E.; Ross, K. A.,
 \newblock {\em Abstract Harmonic Analysis}. vol. 2, Springer-Verlag, Berlin (1970).


\bibitem{JKP}Jung, I. B.; Ko, E.; Pearcy, C., \newblock {\em Aluthge transforms of operators.} \newblock {Integral Equations Operator Theory} {\bf37} (2000), no. 4, 437-448.

\bibitem{R}  Kunze, R. A.,
\newblock {\em Lp Fourier transforms on locally compact unimodular groups}.
\newblock {Trans. Amer. Math. Soc.,} {\bf89} (1958), pp. 519-540.

\bibitem{Okubo2} Okubo, K.,  \newblock {\em On weakly unitarily invariant norm and the Aluthge transformation.}  \newblock {Linear Algebra Appl.} {\bf371} (2003), 369-375.

 \bibitem{Okubo} Okubo, K., \newblock {\em On weakly unitarily invariant norm and the $\lambda$-Aluthge transformation for invertible operator.} \newblock {Linear Algebra Appl.} {\bf419} (2006), no. 1, 48-52.



\bibitem{RS} Schatten, R.,
\newblock {\em Norm Ideals of Completely Continuous Operators}.
Springer-Verlag, Berlin (1960).

\bibitem{B}
Simon, B.,
\newblock {\em Trace Ideals and Their Applications}
(second ed.), Amer. Math. Soc., Providence, RI (2005).

\bibitem{sin} Sinclair, A.; Smith, R., \newblock {\em Finite von
Neumann Algebras and Masas.} London Mathematical Society Lecture Note
Series, 351. Cambridge University Press, Cambridge, 2008. pp. x+400.

\bibitem{VON} Von Neumann, J.,
\newblock {\em Some matrix-inequalities and metrization of matrix-space}.
\newblock {Tomsk. Univ. Rev.}, {\bf1} (1937), pp. 286-300.

\end{thebibliography}

\end{document}